\numberwithin{equation}{section}
\newcommand {\Hom}{\textrm{Hom}}
\newcommand {\Ext}{\textrm{Ext}}
\date{}
\newtheorem*{theorem1}{Theorem A}
\newtheorem*{theorem2}{Theorem B}
\newtheorem*{theorem3}{Theorem C}
\newtheorem{definition}{Definiton}
\newtheorem*{remark}{Remark}
\newtheorem{dy}{Definition}[section]
 \newtheorem*{example}{Example}
\newtheorem{dl}{Theorem}[section]
\newtheorem{co}[dl]{Corollary}
\newtheorem{yl}[dl]{Lemma}
\begin{document}

\title{\bf Some applications of $\tau $-tilting theory$^\star$}
\author{{\small Shen Li, \ Shunhua Zhang}\\
         {\small School of Mathematics, Shandong University,
        Jinan, 250100,P.R.China}}
\date{}
\maketitle
\pagenumbering{arabic}
\setlength{\parskip}{0.1\baselineskip}

\begin{abstract}
Let $A$ be a finite dimensional algebra over an algebraically closed field $k$, and $M$ be a partial tilting $A$-module. We prove that the Bongartz $\tau$-tilting complement of $M$ coincides with its Bongartz complement, and then we give a new proof of that every almost complete tilting $A$-module has at most two complements. Let $A=kQ$ be a path algebra.  We prove that the support $\tau$-tilting quiver $\overrightarrow{Q}({\rm s}\tau$-${\rm tilt} A)$ of $A$ is connected. As an application, we investigate the conjecture of Happel and Unger in [9] which claims that each connected component of the tilting quiver $\overrightarrow{Q}({\rm tilt} A)$ contains only finitely many non-saturated vertices. We prove that this conjecture is true for $Q$ being all Dynkin and Euclidean quivers and wild quivers with two or three vertices, and we also give an example to indicates that this conjecture is not true if $Q$ is a wild quiver with four vertices.
\end{abstract}

\vskip0.1in

{\bf Key words and phrases:}\ $\tau$-tilting module,  support $\tau$-tilting quiver, tilting module, tilting quiver.

\footnote {MSC(2000): 16E10, 16G10.}

\footnote{ $^\star$ Supported by the NSF of China (Grant Nos.11171183  and  11371165),  and also supported by PCSIRT ( IRT1264).}

\footnote{  Email addresses: \  fbljs603@163.com(S.Li), \ \    shzhang@sdu.edu.cn(S.Zhang).}

\vskip0.2in

\section{Introduction}

Adachi, Iyama and Reiten introduce $\tau$-tilting theory which completes the classical tilting theory from the viewpoint of mutation in [1],  and they establish a bijection between the tilting objects in a cluster category and the support $\tau$-tilting modules over each cluster-tilted algebra.

\vskip0.2in

As a generalization of classical tilting modules, support $\tau$-tilting modules satisfy many nice properties. For example, every basic almost complete support $\tau$-tilting module is the direct summand of exactly two basic support $\tau$-tilting modules. This means that mutation of support $\tau$-tilting modules is always possible. Moreover, the set of support $\tau$-tilting modules has a natural structure of poset and the Hasse quiver of this poset coincides with the mutation quiver of support $\tau$-tilting modules. It is also known that there are close relations between support $\tau$-tilting modules, functorially finite torsion classes and two-term silting complexes,  see [1] for details.

\vskip0.2in

In this paper, we use the properties of support $\tau$-tilting modules to prove that the Bongartz $\tau$-tilting complement of a partial tilting module coincides with its Bongartz complement, and then we give a new proof of that every almost complete tilting $A$-module has at most two complements. As an application, we prove that the support $\tau$-tilting quiver $\overrightarrow{Q}($s$\tau$-tilt$A)$ of $A$ is connected if $A$ is hereditary.  Moreover, we investigate the conjecture of Happel and Unger in [9] which claims that each connected component of the tilting quiver $\overrightarrow{Q}(tiltA)$ contains only finitely many non-saturated vertices. We prove that this conjecture is true for $Q$ being all Dynkin and Euclidean quivers and wild quivers with two or three vertices, and we also give an example to indicates that this conjecture is not true if $Q$ is a wild quiver with four vertices.

\vskip0.2in

Let $A$ be a finite dimensional algebra over an algebraically closed field $k$. For an $A$-module $M$, we denote by $|M|$ the number of pairwise nonisomorphic indecomposable  direct summands of $M$.

\vskip0.2in

 An $A$-module $T$ is called a tilting module if it satisfies the following conditions:
\begin{spacing}{1.1}
\par $(1)$ pd\,$_{A}T\leq 1$;
\par $(2)$ $\Ext^{1}_{A}(T,T)=0$;
\par $(3)$ There is a short exact sequence $0\rightarrow A\rightarrow T_{1}\rightarrow T_{2}\rightarrow 0$ with $T_{1},T_{2}\in$\,add\,$T$.
\end{spacing}

\vskip0.2in

An $A$-module $M$ satisfying the above conditions (1) and (2) is called a partial tilting module and if moreover $|M|=|A|-1$, then $M$ is called an almost complete tilting module.

\vskip0.2in

 The following definition is taken from [1].

\begin{definition}
$(a)$ An $A$-module $M$ is called $\tau$-rigid if $\Hom_{A}(M,\tau M)=0$.
\par $(b)$ An $A$-module $M$ is called $\tau$-tilting $($respectively almost complete $\tau$-tilting$)$ if $M$ is $\tau$-rigid and $|M|=|A|$ $(respectively \,|M|=|A|-1)$.
\par $(c)$ An $A$-module $M$ is called support $\tau$-tilting if there exists an idempotent $e$ in $A$ such that $M$ is a $\tau$-tilting $(A/\langle e\rangle)$-module.
\end{definition}

\vskip0.2in

From the above definition we know that any tilting (partial tilting) $A$-module $M$ is $\tau$-tilting ($\tau$-rigid). Let $M$ be a partial tilting $A$-module.
By  [1, Theorem 2.10] there exists a $\tau$-rigid $A$-modules $X$ such that $M\oplus X$ is a $\tau$-tilting $A$-module and Fac\,$(M\oplus X)=^{\perp}$$(\tau M)$. $X$ is called the Bongartz $\tau$-tilting complement of $M$. The partial tilting $A$-module $M$ also has a Bongartz complement. We prove the following theorem.

\vskip0.2in

\begin{theorem1}
Let $M$ be a partial tilting $A$-module and $X$ be its Bongartz $\tau$-tilting complement. Then pd\,$_{A}X\leq 1$ and $T=M\bigoplus X$ is a tilting $A$-module.
In particular, $X$ coincides with the Bongartz complement of $M$.
\end{theorem1}

\vskip0.2in

D.Happel and L.Unger  prove in [6] that for an almost complete tilting $A$-module $M$, it has exactly two nonisomorphic complements if and only if $M$ is faithful. In this paper, we give a new proof of this theorem from the viewpoint of mutation of support $\tau$-tilting modules.

 \vskip0.2in

 Tiling quiver $\overrightarrow{Q}$(tilt$A)$ is introduced in [15] by Riedtmann and Schofield, which gives an explicit description of relations between tilting modules. Also Adachi, Iyama and Reiten define the support $\tau$-tilting quiver $\overrightarrow{Q}({\rm s}\tau$-${\rm tilt} A)$ in [1]. We prove that the tilting quiver $\overrightarrow{Q}($tilt$A)$ can be embedded into the support $\tau$-tilting quiver $\overrightarrow{Q}({\rm s}\tau$-${\rm tilt}A)$. Then we calculate the number of arrows in $\overrightarrow{Q}({\rm tilt}A)$ when $A=kQ$ is a Dynkin hereditary algebra and show that the number of arrows in $\overrightarrow{Q}({\rm tilt} A)$ is independent of the orientation of $Q$. It is known that $\overrightarrow{Q}({\rm tilt} A)$ may not be connected when $A$ is a hereditary algebra. But for $\overrightarrow{Q}({\rm s}\tau$-${\rm tilt} A)$, we give the following result.

\vskip0.2in

\begin{theorem2}
Let $A$ be a finite dimensional hereditary algebra. Then the support $\tau$-tilting quiver $\overrightarrow{Q}({\rm s}\tau$-${\rm tilt} A)$  is connected.
\end{theorem2}

\vskip0.2in

Assume $A=kQ$ is a finite dimensional hereditary algebra. Note that the tilting quiver $\overrightarrow{Q}({\rm tilt} A)$ may contain several connected components. A conjecture of Happel and Unger in [9] is that each connected component of  $\overrightarrow{Q}({\rm tilt} A)$ contains finitely many non-saturated vertices. We prove that this conjecture is true for $Q$ being all Dynkin and Euclidean quivers and wild quivers with two or three vertices.

\vskip0.2in

\begin{theorem3}
Let $A=kQ$ be a finite dimensional hereditary algebra. If $Q$ is a Dynkin quiver, a Euclidean quiver or a wild quiver with two or three vertices, then each connected component of the tilting quiver $\overrightarrow{Q}({\rm tilt} A)$ contains finitely many non-saturated vertices.
\end{theorem3}

\vskip0.2in

\begin{remark}
Let $Q:1\leftleftarrows 2\leftarrow 3 \rightarrow4$ and $B=kQ$. We will show that the tilting quiver $\overrightarrow{Q}({\rm tilt} B)$ contains a connected component which has infinitely many non-saturated vertices.  Therefore, the conjecture of Happel and Unger is not true for some wild quivers.
\end{remark}

\vskip0.2in

This paper is arranged as follows. In section 2, we fix the notations and recall some necessary facts needed for our research. In section 3,  we prove Theorem A. Section 4 and section 5 are devoted to the proof of Theorem B and Theorem C respectively.

 \section{Preliminaries}

 Let $A$ be a finite dimensional algebra over an algebraically closed field $k$. We denote by mod-$A$ the category of all finitely generated right $A$-modules and by $D=\Hom_{k}(-,k)$ the standard duality between mod-$A$ and mod-$A^{op}$. We denote by $\tau_{A}$ the Auslander-Reiten translation of $A$.

\vskip0.2in

 Given any $A$-module $M$, Fac\,$M$ is the subcategory of mod-$A$ whose objects are generated by $M$ and add\,$M$ is the subcategory of mod-$A$ whose objects are the direct summands of finite direct sums of copies of $M$. We denote by $M^{\perp}$ $($respectively $\,^{\perp}M)$ the subcategory of mod-$A$  with objects $X\in$ mod-$A$ satisfying $\Hom_{A}(M,X)=0$(respectively $\Hom_{A}(X,M)=0$ ). pd\,$_{A}M$ is the projective dimension of $M$. We decompose $M$ as $M\cong \oplus ^{m}_{i=1}M^{d_{i}}_{i}$, where each $M_{i}$ is indecomposable, $d_{i}>0$ for any $i$ and $M_{i}$ is not isomorphic to $M_{j}$ if $i\neq j$. The module $M$ is
 called $basic$ if $d_{i}=1$ for any $i$. If $M$ is basic, we define $M[i]=\oplus_{j\neq i}M_{j}$.

\vskip0.1in

  For $\tau$-tilting modules, we have the following result in [1].

 \vskip0.1in

 \begin{yl} {\rm [1, Proposition 1.4]}
 Any faithful $\tau$-tilting $A$-module is a tilting $A$-module.
 \end{yl}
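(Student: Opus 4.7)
The plan is to verify, in turn, the three defining conditions of a classical tilting module for a basic faithful $\tau$-tilting module $M$. The $\Ext^{1}$-vanishing will come from $\tau$-rigidity via the Auslander--Reiten formula, the tilting exact sequence will be obtained from a minimal left $\text{add}\,M$-approximation of $A$ (with faithfulness ensuring injectivity), and the projective dimension bound will be bootstrapped from this sequence using the size equality $|M|=|A|$.

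The first two steps go as follows. By the Auslander--Reiten formula, $D\Ext^{1}_{A}(M,M)\cong\overline{\Hom}_{A}(M,\tau M)$, so $\tau$-rigidity $\Hom_{A}(M,\tau M)=0$ at once gives $\Ext^{1}_{A}(M,M)=0$. Next, let $g\colon A\to T_{1}$ be a minimal left $\text{add}\,M$-approximation; faithfulness of $M$ forces $\bigcap_{h\colon A\to M^{n}}\ker h=0$, so $g$ is injective, and setting $T_{2}=\text{coker}\,g$ yields a short exact sequence $0\to A\to T_{1}\to T_{2}\to 0$ with $T_{1}\in\text{add}\,M$. Applying $\Hom_{A}(-,M)$ and using both the left-approximation property (which makes $\Hom_{A}(T_{1},M)\to\Hom_{A}(A,M)$ surjective) and $\Ext^{1}_{A}(T_{1},M)=0$, I obtain $\Ext^{1}_{A}(T_{2},M)=0$. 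Since $T_{2}$ is a quotient of $T_{1}\in\text{add}\,M$, it lies in $\text{Fac}\,M={}^{\perp}(\tau M)$, so $\Hom_{A}(T_{2},\tau M)=0$ and $M\oplus T_{2}$ is $\tau$-rigid. The maximality of $|M|=|A|$ among the sizes of $\tau$-rigid modules (a basic fact from [1]) then forces every indecomposable summand of $T_{2}$ already to appear in $M$, i.e.\ $T_{2}\in\text{add}\,M$; this supplies condition (3) of the tilting definition.

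The main obstacle is condition (1), $\text{pd}\,_{A}M\leq 1$. The sequence just constructed gives $\text{pd}\,_{A}T_{2}\leq 1$ immediately, since $A$ is projective. The plan is then to verify that $T_{1}\oplus T_{2}$ is itself a classical tilting module: the tilting sequence is already in hand, and the remaining $\Ext^{1}$-vanishing reduces (via the exact sequence and $\text{pd}\,_{A}T_{2}\leq 1$ killing the obstructing $\Ext^{2}$-terms) to the already-established $\Ext^{1}_{A}(M,M)=0$. Since a classical tilting module has exactly $|A|$ nonisomorphic indecomposable summands, and $T_{1}\oplus T_{2}\in\text{add}\,M$ with $|M|=|A|$, we conclude $\text{add}(T_{1}\oplus T_{2})=\text{add}\,M$, whence $\text{pd}\,_{A}M\leq 1$. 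The essential role of faithfulness throughout is the injectivity of $g$: without it one would only land in the support $\tau$-tilting world over a proper quotient $A/\langle e\rangle$ rather than the tilting world over $A$ itself.
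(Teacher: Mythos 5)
This lemma is quoted from [1, Proposition 1.4] and the paper offers no proof of its own, so your attempt can only be measured against the standard argument. Two of your three steps are sound: the Auslander--Reiten formula does give $\Ext^{1}_{A}(M,M)=0$, and faithfulness does force the minimal left $\mathrm{add}\,M$-approximation $g\colon A\to T_{1}$ to be injective, producing $0\to A\to T_{1}\to T_{2}\to 0$. The problems are concentrated in the remaining two conditions. First, your claim that $M\oplus T_{2}$ is $\tau$-rigid is not justified: you verify only $\Hom_{A}(T_{2},\tau M)=0$, whereas $\tau$-rigidity of $M\oplus T_{2}$ also requires $\Hom_{A}(M,\tau T_{2})=0$ and $\Hom_{A}(T_{2},\tau T_{2})=0$; your computation $\Ext^{1}_{A}(T_{2},M)=0$ only yields the vanishing of the stable Hom $\overline{\Hom}_{A}(M,\tau T_{2})$, not of $\Hom_{A}(M,\tau T_{2})$ itself. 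Second, and more seriously, the argument for $\mathrm{pd}_{A}M\leq 1$ is circular: from $0\to A\to T_{1}\to T_{2}\to 0$ one only gets an embedding $\Ext^{2}_{A}(T_{2},-)\hookrightarrow\Ext^{2}_{A}(T_{1},-)$, so $\mathrm{pd}_{A}T_{2}\leq 1$ does \emph{not} follow ``immediately since $A$ is projective''; it would require $\mathrm{pd}_{A}T_{1}\leq 1$, and $T_{1}\in\mathrm{add}\,M$, which is exactly what you are trying to prove. Consequently the verification that $T_{1}\oplus T_{2}$ is a classical tilting module never gets off the ground.

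The missing idea is the standard homological criterion $\mathrm{pd}_{A}M\leq 1$ if and only if $\Hom_{A}(DA,\tau M)=0$. Faithfulness gives $DA\in\mathrm{Fac}\,M$ (the minimal injective cogenerator is generated by any faithful module), and $\tau$-rigidity gives $\mathrm{Fac}\,M\subseteq{}^{\perp}(\tau M)$ because $\Hom_{A}(-,\tau M)$ is left exact; hence $\Hom_{A}(DA,\tau M)=0$ and $\mathrm{pd}_{A}M\leq 1$. (The paper itself uses precisely the converse direction of this criterion in the proof of Theorem 3.1.) Once $\mathrm{pd}_{A}M\leq 1$ is in hand, your approximation sequence does finish the job: $\mathrm{pd}_{A}T_{2}\leq 1$ now follows, the Auslander--Smal\o\ criterion ($\Hom_{A}(X,\tau Y)=0$ if and only if $\Ext^{1}_{A}(Y,\mathrm{Fac}\,X)=0$) upgrades your $\Ext^{1}$-computations to the genuine $\tau$-rigidity of $M\oplus T_{2}$, and the maximality of $\tau$-tilting modules among $\tau$-rigid modules yields $T_{2}\in\mathrm{add}\,M$, giving condition (3).
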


\vskip0.1in

Some certain pairs of $A$-modules are introduced in [1], and it is convenient to view $\tau$-rigid modules and support $\tau$-tilting modules as these pairs.
\begin{dy}
Let $(M,P)$ be a pair with $M \in mod$-$A$ and $P \in  proj$-$A$.
\par $(a)$ We call $(M,P)$ a $\tau$-rigid pair if $M$ is $\tau$-rigid and $\Hom_{A}(P,M)=0$.
\par $(b)$ We call $(M,P)$ a support $\tau$-tilting$\,(respectively \ $almost complete support $\tau$-tilting$)$ pair if $(M,P)$ is a $\tau$-rigid pair and $|M|+|P|=|A|(respectively \ |M|+|P|=|A|-1)$.
\end{dy}

\vskip0.1in

$(M,P)$ is called basic if $M$ and $P$ are basic and we say $(M,P)$ is a direct summand of $(M^{'},P^{'}) $ if $M$ is a direct summand of $M^{'}$ and $P$ is a direct summand of $P^{'}$. One of the main results in [1] is the following.

\vskip0.1in

\begin{yl} {\rm [1, Theorem 2.18]}
Any basic almost complete support $\tau$-tilting pair $(U,Q)$ is a direct summand of exactly two basic support $\tau$-tilting pairs $(T,P)$ and $(T^{'},P^{'})$.
\end{yl}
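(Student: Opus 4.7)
My plan is to combine a Bongartz-type completion with an explicit exchange (mutation) construction, and then close the count by invoking the bijection between basic support $\tau$-tilting pairs and functorially finite torsion classes. For the existence of one completion, let $e$ be the idempotent of $A$ with $eA \cong Q$; then $U$ is $\tau$-rigid over $B := A/\langle e \rangle$, and applying [1, Theorem 2.10] to $U$ over $B$ produces a $\tau$-tilting $B$-module $T$ having $U$ as a direct summand and satisfying $\mathrm{Fac}\,T = {}^{\perp}(\tau_{B} U)$. The pair $(T, Q)$ is then a basic support $\tau$-tilting $A$-pair containing $(U,Q)$ as a direct summand.

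For a second, non-isomorphic completion I would examine the unique indecomposable summand $X$ of $T \oplus Q$ not lying in $U \oplus Q$. If $X$ is a module summand, form a minimal left $\mathrm{add}\,U$-approximation $f : X \to U_{0}$. When $f$ is injective, take $X^{\ast} := \mathrm{coker}\,f$ and verify that $(U \oplus X^{\ast}, Q)$ is again support $\tau$-tilting, using the approximation property to check the $\tau$-rigidity of the extra summand against $U$. When $f$ fails to be injective, the failure is controlled by a projective direct summand, which must be absorbed into the projective part, so the second completion takes the form $(U \oplus X^{\ast}, Q \oplus P_{1})$ for some indecomposable projective $P_{1}$. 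A dual argument covers the case in which the missing summand belongs to $\mathrm{add}\,Q$ rather than to the module part.

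Finally, for the uniqueness (``exactly two''), I would invoke the correspondence, established earlier in [1], between basic support $\tau$-tilting pairs and functorially finite torsion classes in $\mathrm{mod}\,A$ sending $(T,P)$ to $\mathrm{Fac}\,T$. Any completion of $(U,Q)$ corresponds to a functorially finite torsion class lying in the interval $[\mathrm{Fac}\,U,\; {}^{\perp}(\tau U) \cap Q^{\perp}]$, and the almost-complete condition $|U|+|Q| = |A| - 1$ forces this interval to have exactly two elements: intuitively, the missing indecomposable in the exchange can be either the Bongartz-top summand or its mutation partner, with no room in between. The main obstacle will be the exchange step: tracking carefully when the approximation sequence forces the new summand to be a module versus a projective summand, and verifying the $\tau$-rigidity of the resulting pair in both subcases — this is where essentially all of the $\tau$-theoretic content beyond classical tilting is used.
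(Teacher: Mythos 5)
The paper itself offers no proof of this statement: it is imported verbatim as [1, Theorem 2.18], so your sketch can only be measured against the argument of Adachi--Iyama--Reiten. Your architecture does match theirs in outline --- a Bongartz-type completion for existence, an exchange at the extra summand for a second completion, and the bijection with functorially finite torsion classes to control the count --- but the decisive step is missing. After reducing the problem to counting functorially finite torsion classes in the interval between $\mathrm{Fac}\,U$ and ${}^{\perp}(\tau U)\cap Q^{\perp}$ (a reduction that is correct, though it still requires checking that every such torsion class has $U$ among its Ext-projectives and $Q$ among the projectives it annihilates), you assert that this interval ``has exactly two elements'' because ``there is no room in between.'' That assertion \emph{is} the theorem: nothing in the condition $|U|+|Q|=|A|-1$ makes it obvious, since a priori the poset of torsion classes between $\mathrm{Fac}\,U$ and ${}^{\perp}(\tau U)\cap Q^{\perp}$ can be large. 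The real content in [1] is the proof that any completion $(T,P)$ with $\mathrm{Fac}\,T\neq\mathrm{Fac}\,U$ necessarily satisfies $\mathrm{Fac}\,T={}^{\perp}(\tau U)\cap Q^{\perp}$, via approximation arguments of the kind recorded in this paper as the lemma quoted from [1, Lemma 2.20]; that argument is absent from your proposal.

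There is also a technical misstep in the exchange construction. The sequence $X\xrightarrow{f}U'\to Y\to 0$ obtained from a minimal left $\mathrm{add}\,U$-approximation is only right exact, and the relevant dichotomy is not ``$f$ injective versus $f$ non-injective with projective kernel.'' By [1, Theorem 2.30] (quoted in Section 2 of the paper) the governing condition is whether $U$ is sincere over its support algebra: if not, then $Y=0$ and the second pair is $(U,\,Q\oplus P_{1})$ for the indecomposable projective at a vertex missed by $U$; if so, then $Y$ is a direct sum of copies of a single indecomposable $Y_{1}$ and the second pair is $(U\oplus Y_{1},\,Q)$. Moreover, invoking that mutation theorem here would be circular, since in [1] it is proved after, and by means of, Theorem 2.18; and the $\tau$-rigidity of the new pair in each case is precisely what must be established, not merely ``verified using the approximation property.'' So the proposal identifies the correct strategy but leaves the two genuinely hard points --- the uniqueness count and the rigidity of the mutated pair --- unproved.
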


\vskip0.1in

Then $(T,P)$ is called left mutation of $(T^{'},P^{'})$ if Fac\,$T\subseteq$ Fac\,$T^{'}$ and this is denoted by $T=\mu^{-}(T^{'})$.  Adachi, Iyama and Reiten show in [1] that one can calculate left mutations of support $\tau$-tilting modules by exchange sequence constructed from left approximations.

\vskip0.1in

\begin{yl}  {\rm [1, Theorem 2.30]}
Let $T=X\oplus U$ be a basic $\tau$-tilting  $A$-module where the indecomposable $A$-module $X$ is the Bongartz $\tau$-tilting complement of U.  Let $X\xrightarrow{f}U^{'}\xrightarrow{g}Y\rightarrow0$ be an exact sequenceㄛ where f is a minimal left add $U$-approximation.
Then we have the following.
\par $(a)$ If U is not sincere, then Y=0. In this case U=$\mu^{-}_{X}$(T) holds and it is a basic support $\tau$-tilting A-module which is not $\tau$-tilting.
\par $(b)$ If U is sincere, then Y is a direct sum of copies of an indecomposable A-module Y$_{1}$ and $Y_{1}\notin$ add $T$. In this case $Y$$_{1}\oplus $U=$\mu^{-}_{X}(T)$ holds and it is a basic $\tau$-tilting A-module.
\end{yl}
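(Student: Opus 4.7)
The proof relies on Lemma 2.2 (= [1, Theorem 2.18]): every basic almost complete support $\tau$-tilting pair has exactly two completions to a basic support $\tau$-tilting pair. The pair $(U,0)$ is almost complete support $\tau$-tilting, with $(T,0)=(X\oplus U,0)$ as one completion. My plan is to identify the other completion in each case and then match it against the exchange sequence data $X\xrightarrow{f}U'\xrightarrow{g}Y\to 0$.

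Case (a): If $U$ is not sincere, choose a primitive idempotent $e$ with $Ue=0$ and set $P=Ae$. Then $\Hom_A(P,U)=Ue=0$, so $(U,P)$ is a $\tau$-rigid pair with $|U|+|P|=|A|$, and must be the unique second completion; this realizes $U$ as a $\tau$-tilting $(A/\langle e\rangle)$-module but not a $\tau$-tilting $A$-module. Since $U$ is a summand of $T$, the inclusion $\mathrm{Fac}\,U\subseteq\mathrm{Fac}\,T$ is automatic, giving $U=\mu^-_X(T)$. To conclude $Y=0$: were $Y\neq 0$, the approximation-theoretic construction carried out in case (b) would produce an indecomposable $Y_1\notin\mathrm{add}\,T$ making $U\oplus Y_1$ a $\tau$-tilting module of rank $|A|$, yielding a third completion of $(U,0)$ distinct from $(T,0)$ and from $(U,P)$, and contradicting Lemma 2.2.

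Case (b): If $U$ is sincere, no non-trivial idempotent of $A$ annihilates $U$, so the other completion cannot have a projective summand; it must be of the form $(U\oplus Z,0)$ with $Z$ indecomposable and $Z\notin\mathrm{add}\,T$. To recover $Z$ from the approximation sequence, use the Auslander-Reiten formula applied to $X\xrightarrow{f}U'\xrightarrow{g}Y\to 0$ together with the $\tau$-rigidity of $T=X\oplus U$ to deduce $\Hom_A(Y,\tau U)=0$, $\Hom_A(U,\tau Y)=0$ and $\Hom_A(Y,\tau Y)=0$. Thus every indecomposable summand $Y_1$ of $Y$ produces a $\tau$-rigid module $U\oplus Y_1$ of rank $|A|$; by the uniqueness in Lemma 2.2 the summands of $Y$ are all isomorphic to a single $Y_1\notin\mathrm{add}\,T$, and $U\oplus Y_1=\mu^-_X(T)$. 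The inclusion $\mathrm{Fac}(U\oplus Y_1)\subseteq\mathrm{Fac}\,T$ follows because the surjection $g$ places $Y_1\in\mathrm{Fac}\,U'\subseteq\mathrm{Fac}\,T$.

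The main obstacle is the vanishing of the three $\Hom$ spaces in case (b), which encodes the $\tau$-rigidity of the mutated module. This requires careful application of the Auslander-Reiten formula $D\overline{\Hom}_A(N,\tau L)\cong\Ext^1_A(L,N)$ combined with essential use of the \emph{minimality} of $f$ (not merely the approximation property), both to rule out direct summands of $Y$ lying in $\mathrm{add}\,U$ and to guarantee that the indecomposable summands of $Y$ are all isomorphic.
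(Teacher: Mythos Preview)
The paper does not contain its own proof of this lemma: it is quoted from [1, Theorem 2.30] (Adachi--Iyama--Reiten) as a background result and used without argument. There is therefore nothing in the present paper to compare your proposal against.

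For what it is worth, your outline follows the same strategy as the original proof in [1]: use Lemma~2.2 to pin down the second completion of the almost complete pair $(U,0)$, and then match that completion to the output of the approximation sequence. Your identification of the main technical obstacle---the three $\Hom$-vanishings in case~(b), which require both the Auslander--Reiten formula and genuine use of the minimality of $f$---is accurate. One place where your sketch is loose is the reduction in case~(a): invoking ``the approximation-theoretic construction carried out in case~(b)'' to rule out $Y\neq 0$ is circular as written, since the case~(b) argument presupposes that $U$ is sincere. In [1] this is handled by first proving \emph{unconditionally} that $U\oplus Y_1$ (with $Y_1$ any indecomposable summand of $Y$, or $Y_1=0$ if $Y=0$) is a support $\tau$-tilting module, and only afterwards splitting into the sincere/non-sincere dichotomy; you may want to reorganize along those lines.
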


\vskip0.1in

 The support $\tau$-tilting quiver $\overrightarrow{Q}({\rm s}\tau$-${\rm tilt} A)$ is defined  as follows:

 \vskip0.1in

\begin{dy}
$(1)$The set of vertices is ${\rm s}\tau$-${\rm tilt} A$
\par $(2)$There is an arrow from T to U if U is a left mutation of T.
\end{dy}

\vskip0.1in

Since we have a bijection $T\rightarrow$ Fac\,$T$ between basic support $\tau$-tilting modules and functorially finite torsion classes, there exists a natural partial order on the set s$\tau$-tilt$A$ of support $\tau$-tilting $A$-modules: $T_{1}<T_{2}$, if Fac\,$T_{1}\subseteq$ Fac\,$T_{2}$. Moreover, the Hasse quiver of this poset coincides with the support $\tau$-tilting quiver $\overrightarrow{Q}({\rm s}\tau$-${\rm tilt} A)$.

\vskip0.2in

The following lemma in [1] is very useful.

\vskip0.1in

\begin{yl}{\rm [1, Lemma 2.20]}
Let $(T,P)$ be a $\tau$-rigid pair for A and $P($Fac\,$T$) be the direct sum of one copy of each indecomposable Ext-projective $A$-modules in $Fac\,T$. If U is a $\tau$-rigid A-module satisfying $^{\perp}(\tau T)\cap P^{\perp}\subseteq ^{\perp}$$(\tau U)$, then there is an exact sequence $U\xrightarrow{f}T^{'}\rightarrow C \rightarrow 0$ satisfying the following conditions
\par $(1)$ f is a minimal left Fac\,$T$-approximation.
\par $(2)$ $T^{'}\in$add\,$T$, $C\in$add\,$P($Fac\,$T$) and add\,$T^{'}\bigcap\,$add\,$C$=0.
\end{yl}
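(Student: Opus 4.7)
The plan is to take a minimal left Fac\,$T$-approximation of $U$, verify that its target can be chosen inside add\,$T$, and then identify the cokernel via Ext-projectivity in Fac\,$T$. Since $T$ is a finitely generated module, Fac\,$T$ is a functorially finite subcategory of mod-$A$, so a minimal left Fac\,$T$-approximation $f\colon U\to T'$ exists; letting $C$ be its cokernel already produces the right-exact sequence $U\xrightarrow{f}T'\to C\to 0$ that the lemma demands.

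To see that $T'\in$ add\,$T$, I would first establish $\Ext^{1}_{A}(U,X)=0$ for every $X\in$ Fac\,$T$. Since $T$ is $\tau$-rigid and $P$ is projective with $\Hom_{A}(P,T)=0$, the subcategories ${}^{\perp}(\tau T)$ and $P^{\perp}$ are closed under quotients, so Fac\,$T\subseteq{}^{\perp}(\tau T)\cap P^{\perp}$. The hypothesis then forces Fac\,$T\subseteq{}^{\perp}(\tau U)$, and Auslander--Reiten duality delivers the desired $\Ext^{1}$-vanishing (the projective-summand technicality in the AR formula is controlled by the $P^{\perp}$ condition). Consequently, every morphism $g\colon U\to X$ with $X\in$ Fac\,$T$ lifts through any surjection $T^{n}\twoheadrightarrow X$ with $T^{n}\in$ add\,$T$, so a minimal left add\,$T$-approximation of $U$ is automatically a left Fac\,$T$-approximation, and one may take $T'\in$ add\,$T$.

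For the cokernel, apply $\Hom_{A}(-,X)$ for arbitrary $X\in$ Fac\,$T$ to $U\to T'\to C\to 0$. The approximation property makes $\Hom_{A}(T',X)\to\Hom_{A}(U,X)$ surjective, and $\Ext^{1}_{A}(T',X)=0$ since $T'\in$ add\,$T$ and Fac\,$T\subseteq{}^{\perp}(\tau T)$ makes $T$ Ext-projective in Fac\,$T$ via AR-duality. Hence $\Ext^{1}_{A}(C,X)=0$, so $C$ is Ext-projective in Fac\,$T$ and every indecomposable summand of $C$ belongs to add\,$P(\text{Fac}\,T)$. Minimality of $f$ then excludes common summands of $T'$ and $C$: an indecomposable common summand $Y$ would let the epimorphism $T'\to C$ split off $\mathrm{id}_{Y}$, forcing the $Y$-component of $f$ to vanish and contradicting minimality. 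This yields add\,$T'\cap$ add\,$C=0$.

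The main obstacle is the Ext-vanishing step, because that is where the hypothesis ${}^{\perp}(\tau T)\cap P^{\perp}\subseteq{}^{\perp}(\tau U)$ is genuinely invoked, and translating $\Hom_{A}(X,\tau U)=0$ into $\Ext^{1}_{A}(U,X)=0$ through the Auslander--Reiten formula has to be handled with care at projective or injective summands of $X$. Once this vanishing is in place, the remaining steps — identifying $T'\in$ add\,$T$, extracting Ext-projectivity of $C$, and ruling out common summands — are standard manipulations of the approximation sequence together with minimality.
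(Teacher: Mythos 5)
The paper itself offers no proof of this statement: it is quoted verbatim from Adachi--Iyama--Reiten as [1, Lemma 2.20] in the preliminaries, so your attempt can only be measured against the statement. Your skeleton is the standard one (minimal left $\mathrm{Fac}\,T$-approximation, Auslander--Reiten duality to exploit the hypothesis, Ext-projectivity of the cokernel, minimality for the last claim), and the step you execute in detail is correct: since $^{\perp}(\tau T)$ and $P^{\perp}$ contain $T$ and are closed under quotients ($P$ being projective), one gets $\mathrm{Fac}\,T\subseteq{}^{\perp}(\tau T)\cap P^{\perp}\subseteq{}^{\perp}(\tau U)$, hence $\Ext^{1}_{A}(U,X)\cong D\overline{\Hom}_{A}(X,\tau U)=0$ for all $X\in\mathrm{Fac}\,T$; the ``projective-summand technicality'' you worry about is harmless in this direction, because full Hom-vanishing trivially implies stable Hom-vanishing. (Minor point: covariant finiteness of $\mathrm{Fac}\,T$ should be attributed to the $\tau$-rigidity of $T$, via Auslander--Smal\o{} as cited in [1], not merely to $T$ being finitely generated.)

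The proof breaks, however, exactly at the conclusion that carries the content of the lemma, namely $T^{'}\in\mathrm{add}\,T$. You argue that $\Ext^{1}_{A}(U,X)=0$ for $X\in\mathrm{Fac}\,T$ lets every $g\colon U\to X$ lift through a surjection $T^{n}\twoheadrightarrow X$. The obstruction to that lifting lives in $\Ext^{1}_{A}(U,K)$ where $K=\ker(T^{n}\twoheadrightarrow X)$ is a \emph{submodule} of $T^{n}$, not a quotient: it need not lie in $\mathrm{Fac}\,T$, so the vanishing you established says nothing about it. One would have to show $K\in{}^{\perp}(\tau T)\cap P^{\perp}$ for a well-chosen surjection; the $P^{\perp}$ half is automatic, but $\Hom_{A}(K,\tau T)=0$ is not (even for a right $\mathrm{add}\,T$-approximation one only obtains $\overline{\Hom}_{A}(K,\tau T)=0$), and this is precisely where the real work of [1, Lemma 2.20] lies. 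A second, smaller gap: for $\mathrm{add}\,T^{'}\cap\mathrm{add}\,C=0$ you assert that a common indecomposable summand $Y$ would let $g\colon T^{'}\to C$ ``split off $\mathrm{id}_{Y}$,'' but nothing forces the $(Y,Y)$-component of $g$ to be an isomorphism. What left minimality actually yields is that $1_{T^{'}}+hg$ is an automorphism for every $h\colon C\to T^{'}$ (since $gf=0$ gives $(1_{T^{'}}+hg)f=f$), whence every component $Y\to Y$ of $g$ is a radical map; converting this into a contradiction with the surjectivity of $g$ still needs a further argument. Finally, when applying $\Hom_{A}(-,X)$ to $U\to T^{'}\to C\to 0$ you should first split the sequence at $\mathrm{Im}\,f$, since $f$ need not be injective; that repair is routine, but the two gaps above are not.
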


\vskip0.1in

Let $A$ be a finite dimensional hereditary algebra and  $\mathcal{C}_{A}$ be the cluster category associated to $A$. We assume that $\mathcal{C}_{A}$ has a cluster-tilting object $T$ and $\Lambda $=End$_{\mathcal{C}}(T)$ is the cluster-tilted algebra. We have the following.

\vskip0.1in

\begin{yl}{\rm [1, Theorem 4.1]}
There exists a bijection between basic cluster tilting objects in $\mathcal{C}_{A}$ and the basic support $\tau$-tilting modules over $\Lambda $
\end{yl}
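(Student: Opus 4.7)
The plan is to exhibit the bijection explicitly via the cluster-tilting functor $F=\Hom_{\mathcal{C}_A}(T,-)\colon \mathcal{C}_A\to \Mod\,\Lambda$ together with the Buan--Marsh--Reiten theorem, which states that $F$ induces an equivalence $\overline{F}\colon \mathcal{C}_A/[\,\mathrm{add}\,T[1]\,]\xrightarrow{\sim} \Mod\,\Lambda$. In particular, $\overline{F}$ yields a bijection between the indecomposables of $\mathcal{C}_A$ outside $\mathrm{add}\,T[1]$ and the indecomposable $\Lambda$-modules, while the summands of $T$ correspond, via $T_i\mapsto F(T_i)$, to the indecomposable projective $\Lambda$-modules. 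Given a basic cluster-tilting object $U\in \mathcal{C}_A$, I would write $U=U'\oplus T''[1]$ uniquely with $T''\in\mathrm{add}\,T$ and no indecomposable summand of $U'$ in $\mathrm{add}\,T[1]$, and define $\Phi(U)=(F(U'),F(T''))$. The count $|F(U')|+|F(T'')|=|U|=|\Lambda|$ is immediate from $\overline{F}$, and $F(T'')$ is visibly projective.

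I would then verify that $\Phi(U)$ is a support $\tau$-tilting pair. The vanishing $\Hom_\Lambda(F(T''),F(U'))=0$ reduces, via $\overline{F}$, to showing $\Hom_{\mathcal{C}_A}(T'',U')=0$, because any factorisation $T''\to T[1]\to U'$ is zero in view of $\Hom_{\mathcal{C}_A}(T'',T[1])=\Ext^1_{\mathcal{C}_A}(T'',T)=0$; this vanishing follows from $\Ext^1_{\mathcal{C}_A}(U,U)=0$ together with the triangulated identity $\Ext^1_{\mathcal{C}_A}(T''[1],U')\cong \Hom_{\mathcal{C}_A}(T'',U')$. The central tool for $\tau$-rigidity of $F(U')$ is a ``Hom--$\tau$ dictionary''
\[
\Hom_\Lambda(F(X),\tau_\Lambda F(Y))\;\cong\;D\,\Ext^1_{\mathcal{C}_A}(Y,X),
\]
valid whenever $X,Y\in\mathcal{C}_A$ have no summand in $\mathrm{add}\,T[1]$. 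I would derive it by lifting a minimal projective presentation $F(T_1)\to F(T_0)\to F(X)\to 0$ from a triangle $T_1\to T_0\to X\to T_1[1]$ in $\mathcal{C}_A$ with $T_0,T_1\in\mathrm{add}\,T$, applying $\Hom_\Lambda(-,F(Y))$, comparing the cokernel with the defining exact sequence of $\tau_\Lambda$, and identifying the outcome with $D\,\Hom_{\mathcal{C}_A}(X,Y[1])$ via the 2-Calabi--Yau duality of $\mathcal{C}_A$. Specialising $X=Y=U'$ and invoking $\Ext^1_{\mathcal{C}_A}(U',U')=0$ then gives $\Hom_\Lambda(F(U'),\tau_\Lambda F(U'))=0$.

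The inverse $\Psi$ is constructed by reversing the recipe: given a basic support $\tau$-tilting pair $(M,P)$, write $P=F(T'')$ using the bijection between indecomposable projectives and summands of $T$, lift $M$ to some $X\in \mathcal{C}_A$ with no summand in $\mathrm{add}\,T[1]$ via $\overline{F}$, and set $\Psi(M,P)=X\oplus T''[1]$. Running the dictionary formula backwards gives $\Ext^1_{\mathcal{C}_A}(X,X)=0$; the support condition $\Hom_\Lambda(P,M)=0$ lifts, by the same factorisation argument as above, to $\Hom_{\mathcal{C}_A}(T'',X)=0$, which combined with 2-Calabi--Yau duality forces the vanishing of the mixed $\Ext^1$-groups between $X$ and $T''[1]$; and $\Ext^1_{\mathcal{C}_A}(T''[1],T''[1])\cong\Ext^1_{\mathcal{C}_A}(T'',T'')=0$ because $T$ is cluster-tilting. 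By the characterisation of basic cluster-tilting objects in $\mathcal{C}_A$ as basic rigid objects with $|\Lambda|$ indecomposable summands, $\Psi(M,P)$ is cluster-tilting, and mutual inverseness of $\Phi$ and $\Psi$ follows formally from the equivalence $\overline{F}$.

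The principal obstacle I foresee is the dictionary formula itself. Translating the AR-translate $\tau_\Lambda$, which is defined intrinsically on $\Mod\,\Lambda$ via projective presentations, into an invariant of the triangulated category $\mathcal{C}_A$ requires simultaneously the Buan--Marsh--Reiten equivalence, the 2-Calabi--Yau duality, and careful bookkeeping of which morphisms vanish modulo $\mathrm{add}\,T[1]$. Once that identification is in hand, both directions of the bijection reduce to essentially formal checks.
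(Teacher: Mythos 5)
This lemma is not proved in the paper at all: it is imported verbatim as Theorem 4.1 of Adachi--Iyama--Reiten [1], so your attempt has to be measured against their argument rather than against anything in this text. Your overall architecture --- decompose $U=U'\oplus T''[1]$ with $T''\in\mathrm{add}\,T$, send $U$ to the pair $(F(U'),F(T''))$, invoke the Buan--Marsh--Reiten/Keller--Reiten equivalence $\mathcal{C}_A/[\mathrm{add}\,T[1]]\simeq \Mod\,\Lambda$, and translate rigidity in $\mathcal{C}_A$ into $\tau$-rigidity over $\Lambda$ --- is exactly theirs, and your counting argument and your treatment of the condition $\Hom_\Lambda(F(T''),F(U'))=0$ are correct.

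The gap is your ``dictionary'' $\Hom_\Lambda(F(X),\tau_\Lambda F(Y))\cong D\,\Ext^1_{\mathcal{C}_A}(Y,X)$, which is false even for $X,Y$ with no summands in $\mathrm{add}\,T[1]$. Take $\mathcal{C}_A$ the cluster category of $A_2$ ($Q\colon 1\to 2$) and $T=A$, so $\Lambda\cong A$; for $X=S_1$ and $Y=P_2$ one has $\Hom_\Lambda(S_1,\tau P_2)=0$, while $\Ext^1_{\mathcal{C}_A}(P_2,S_1)\cong D\Ext^1_{\mathcal{C}_A}(S_1,P_2)\cong D\Ext^1_A(S_1,P_2)\cong k$. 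The correct relation, which is in substance what [1] proves on the way to Theorem 4.1, is a short exact sequence
\[
0\longrightarrow D\Hom_\Lambda(FY,\tau FX)\longrightarrow \Ext^1_{\mathcal{C}_A}(X,Y)\longrightarrow \Hom_\Lambda(FX,\tau FY)\longrightarrow 0 ,
\]
so the single $2$-Calabi--Yau self-dual space $\Ext^1_{\mathcal{C}_A}(X,Y)$ is an extension of the two generally non-isomorphic spaces $\Hom_\Lambda(FX,\tau FY)$ and $D\Hom_\Lambda(FY,\tau FX)$; your isomorphism would identify it with each of them separately and hence force $\dim\Ext^1_{\mathcal{C}_A}(X,Y)=2\dim\Ext^1_{\mathcal{C}_A}(X,Y)$. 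The forward direction of your bijection survives, since $\Ext^1_{\mathcal{C}_A}(U',U')=0$ kills both ends of the sequence. But ``running the dictionary backwards'' is precisely where the content of the theorem sits: from $\Hom_\Lambda(M,\tau M)=0$ a mere surjection $\Ext^1_{\mathcal{C}_A}(X,X)\twoheadrightarrow\Hom_\Lambda(M,\tau M)$ gives nothing, and you need the full exact sequence, together with the identification $\Hom_{\mathcal{C}_A}(T'',X)\cong\Hom_\Lambda(P,M)$ for the mixed terms, to conclude that $\Psi(M,P)$ is rigid. A secondary point you should not gloss over: identifying $\mathrm{Coker}\bigl(\Hom(FT_0,FY)\to\Hom(FT_1,FY)\bigr)$ with $D\Hom_\Lambda(FY,\tau FX)$ requires the induced presentation $FT_1\to FT_0\to FX\to 0$ to be minimal, which must be arranged by choosing the triangle $T_1\to T_0\to X\to T_1[1]$ minimally and checking that $F$ preserves this minimality.
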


\vskip0.1in

Assume $A=kQ$ is a finite dimensional hereditary algebra where $Q$ is a finite quiver with $n$ vertices and $a_{s}(Q)$$(1\leq s \leq n)$  denote the number of basic support $\tau$-tilting $A$-modules with $s$ nonisomorphic indecomposable direct summands. Note that the support $\tau$-tilting $A$-modules coincide with the support tilting $A$-modules since $A$ is a hereditary algebra. If $Q$ is a Dynkin quiver, according to [13], all $a_{s}(Q)$$(1\leq s \leq n)$ are constants and do not depend on the orientation of $Q$.

\vskip0.1in

\begin{yl} {\rm [13, Theorem 1]}
Let $A=kQ$ be a path algebra of a Dynkin quiver $Q$. Then we have \\
\\
\begin{table}[h]
\centering
 \begin{tabular}{|c|c|c|c|c|c|}
\hline
$Q$ & $A_{n}$ & $D_{n}$ & $E_{6}$ & $E_{7}$& $E_{8}$\\
\hline
$a_{n}(Q)$& $\frac{1}{n+1}$$  C^{n}_{2n}$&$  \frac{3n-4}{2n-2}C^{n-2}_{2n-2}$&$418$&$2431$&$17342$\\
\hline
$a_{n-1}(Q)$&$ \frac{2}{n+1}C^{n-1}_{2n-1}$&$\frac{3n-4}{2n-3}C^{2n-3}_{n-1} $&$228$&$1001$&$4784$\\
\hline
\end{tabular}
\end{table}
\end{yl}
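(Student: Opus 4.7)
The plan is to prove the formulas in three stages: first establish that $a_s(Q)$ depends only on the underlying Dynkin diagram and not on the orientation of $Q$; then reinterpret the basic support $\tau$-tilting modules over $kQ$ (which, since $A=kQ$ is hereditary, coincide with basic support tilting modules) in terms of a combinatorial model, namely the cluster complex of type $Q$; and finally perform the enumeration for each of $A_n$, $D_n$, $E_6$, $E_7$, $E_8$.

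For orientation-independence, I would use BGP reflection functors. If $i$ is a sink of $Q$ and $Q'$ is obtained by reversing the arrows at $i$, the reflection functor (equivalently, tilting at the simple projective $P_i$) induces a triangle equivalence between the bounded derived categories of $kQ$ and $kQ'$. This equivalence sends indecomposables to indecomposables (with the single exception of $S_i$), and it restricts to a bijection between basic support tilting $kQ$-modules and basic support tilting $kQ'$-modules that preserves the number of nonisomorphic indecomposable summands. Since any two orientations of a fixed Dynkin graph are connected by a sequence of sink/source reflections, the invariants $a_s(Q)$ depend only on the underlying diagram, so it is enough to compute them for one convenient orientation of each type.

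Having fixed an orientation, the enumeration proceeds via the bijection between basic support tilting $kQ$-modules and facets of the cluster complex of the corresponding finite type cluster algebra (equivalently, vertices of the generalized associahedron): under this correspondence, the number of indecomposable direct summands records the dimension of the face. For type $A_n$ with linear orientation, support tilting modules correspond to triangulations of an $(n+2)$-gon, whose count is the Catalan number $\frac{1}{n+1}\binom{2n}{n}$, and those with $n-1$ summands correspond to $(n+2)$-gon dissections obtained by deleting a single diagonal from a triangulation, giving $\frac{2}{n+1}\binom{2n-1}{n-1}$ after accounting for overcounting. Type $D_n$ is handled analogously by the punctured-polygon model of Fomin–Zelevinsky–Shapiro–Thurston. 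For $E_6, E_7, E_8$, the cluster complex is finite but not built from a polygonal model, so one resorts to direct enumeration of its facets and codimension-one faces, either by hand via the finite mutation graph or using the face-number tables for the generalized associahedra.

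I expect the main obstacle to lie in the exceptional types: unlike $A_n$ and $D_n$, the cases $E_6, E_7, E_8$ do not admit a clean closed-form combinatorial model and require either careful case analysis within the cluster exchange graph or appeal to Fomin–Zelevinsky's explicit $f$-vectors of the generalized associahedra $\Delta(E_6), \Delta(E_7), \Delta(E_8)$. A secondary subtlety is that the reflection-functor argument for orientation-independence must be adapted to support $\tau$-tilting modules with $|M|<n$; for this, one checks that the equivalence respects the decomposition $T \mapsto (T', P)$ of support $\tau$-tilting pairs, which reduces to the fact that reflection functors send $\tau$-rigid summands to $\tau$-rigid summands while possibly exchanging the roles of a simple summand and a projective in the pair.
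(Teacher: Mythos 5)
This statement appears in the paper as Lemma 2.6 and is quoted verbatim from [13] (Obaid--Nauman--Fakieh--Ringel); the paper gives no proof of its own, so your proposal can only be measured against the literature. Your overall strategy --- orientation-independence via sink/source reflection functors, then enumeration through the support-tilting/cluster correspondence, with explicit computation unavoidable in types $E_6,E_7,E_8$ --- is a legitimate route and close in spirit to how such counts are established in the sources.

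There is, however, a concrete error in your enumeration of $a_{n-1}(A_n)$. Support tilting modules with $n-1$ summands are \emph{not} in bijection with dissections of the $(n+2)$-gon obtained by deleting a single diagonal from a triangulation. A triangulation of the $(n+2)$-gon has $n-1$ diagonals, and each dissection with exactly one quadrilateral face completes to a triangulation in exactly $2$ ways, so that model yields $\frac{n-1}{2}\cdot\frac{1}{n+1}\binom{2n}{n}$, which for $n=4$ equals $21$; the correct value is $\frac{2}{5}\binom{7}{3}=14$. What the deleted-diagonal count actually enumerates is (faithful) almost complete tilting modules up to choice of complement --- a different object. The correct computation is $a_{n-1}(Q)=\sum_{i}\bigl(\text{number of tilting modules over } kQ/\langle e_i\rangle\bigr)$, where the quotient is a product of smaller type-$A$ path algebras and the count is the corresponding product of Catalan numbers (for $A_4$: $5+2+2+5=14$), after which the closed form $\frac{2}{n+1}\binom{2n-1}{n-1}$ follows from a Catalan convolution identity. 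The analogous care is needed in type $D_n$. Until the combinatorial model for the $(n-1)$-summand count is replaced along these lines, the proposal does not establish the second row of the table.
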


\vskip0.1in

Throughout this paper, we follow the standard terminologies and
notations used in the representation theory of algebras, see [3, 4, 16].

\section{Complements of partial tilting modules}

Let $A$ be a finite dimensional algebra over an algebraically closed field $k$. In this section, we prove Theorem A and give a new proof of that every almost complete tilting module has at most two complements.

\vskip0.2in

Let $M$ be a partial tilting $A$-module. It has been proved in [5] that $M$ has a complement $Y$, which is called the Bongartz complement. This complement is constructed by a universal sequence $0\rightarrow A \rightarrow E \rightarrow M^{s}\rightarrow 0$, where $s$=dim\,$_{k}\Ext^{1}_{A}(M,A)$ and $E=Y^{t}\oplus M^{'}$ with $M^{'}\in$\,add\,$M$ and some integer $t$.

\vskip0.2in

Note that $M$ is also a $\tau$-rigid $A$-module. By [1, Theorem 2.10], there exists a $\tau$-rigid $A$-module $X$ such that $T=M\oplus X$ is  $\tau$-tilting and Fac\,$T=^{\perp}$$(\tau M)$. $X$ is called the Bongartz $\tau$-tilting complement of $M$ and it is unique up to isomorphism. We prove that $X$ coincides with the Bongartz complement $Y$.

\vskip0.1in

\begin{dl}
Let $M$ be a partial tilting $A$-module and $X$ be its Bongartz $\tau$-tilting complement. Then pd\,$_{A}X\leq 1$ and $T=M\bigoplus X$ is a tilting $A$-module.
In particular, $X$ coincides with the Bongartz complement of $M$.
\end{dl}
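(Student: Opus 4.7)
The plan is to show that the classical Bongartz complement $Y$ of $M$ must coincide with $X$; once this identification is made, both conclusions (pd$_{A}X\leq 1$ and that $T=M\oplus X$ is tilting) follow at once from the fact that the classical Bongartz completion $M\oplus Y$ is tilting. The strategy is to locate $Y$ as a direct summand of the $\tau$-tilting module $M\oplus X$ via the Ext-projective characterization of the Bongartz $\tau$-tilting completion.

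First I would invoke the classical Bongartz construction: there is a universal exact sequence $0\to A\to E\to M^{s}\to 0$ with $s=\dim_{k}\Ext^{1}_{A}(M,A)$ and $E=Y^{t}\oplus M'$, where $M'\in\textrm{add}\,M$ and $Y$ has no indecomposable summand in $\textrm{add}\,M$; then $T':=M\oplus Y$ is a tilting module. Because tilting modules are $\tau$-rigid, one has $\Hom_{A}(Y,\tau M)\subseteq \Hom_{A}(T',\tau T')=0$, so $Y$ lies inside ${}^{\perp}(\tau M)=\textrm{Fac}(M\oplus X)$.

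Next I would verify that $Y$ is Ext-projective in ${}^{\perp}(\tau M)$. For any $Z\in{}^{\perp}(\tau M)$, the Auslander--Reiten formula gives $\Ext^{1}_{A}(M,Z)=0$. Applying $\Hom_{A}(-,Z)$ to the universal exact sequence and combining $\Ext^{1}_{A}(A,Z)=0$ (since $A$ is projective) with $\Ext^{1}_{A}(M^{s},Z)=0$ in the resulting long exact sequence, one obtains $\Ext^{1}_{A}(E,Z)=0$, hence $\Ext^{1}_{A}(Y,Z)=0$.

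By the bijection between functorially finite torsion classes and basic support $\tau$-tilting modules (the latter recovered as the Ext-projectives of the former), the basic $\tau$-tilting module associated to ${}^{\perp}(\tau M)$ is precisely $M\oplus X$, so $Y\in\textrm{add}(M\oplus X)$. Since $Y$ shares no indecomposable summand with $M$, in fact $Y\in\textrm{add}\,X$, and the equality $|Y|=|A|-|M|=|X|$ together with $X$ and $Y$ both being basic forces $X\cong Y$. Consequently $T\cong M\oplus Y$ is tilting and pd$_{A}X\leq 1$. The step I expect to require the most care is the Ext-projectivity verification in the third paragraph: one must transfer the classical fact that $Y$ is Ext-projective in $M^{\perp_{1}}$ to the a priori smaller torsion class ${}^{\perp}(\tau M)$, and this hinges on the Auslander--Reiten inclusion ${}^{\perp}(\tau M)\subseteq M^{\perp_{1}}$ together with the full universal extension --- the mere tilting property of $T'$ is not by itself enough.
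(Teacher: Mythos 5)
Your proof is correct, but it runs in the opposite direction from the paper's. The paper first proves intrinsically that $T=M\oplus X$ is tilting: since pd$_{A}M\leq 1$ one has $\Hom_{A}(DA,\tau M)=0$, so $DA\in{}^{\perp}(\tau M)=\textrm{Fac}\,T$, hence $T$ is faithful, and a faithful $\tau$-tilting module is tilting by [1, Proposition 1.4]; it then identifies $X$ with the Bongartz complement via the Riedtmann--Schofield criterion ([15, Proposition 1.2]: no summand $X_{i}$ is generated by $T[i]$, which the paper deduces from uniqueness of the Bongartz $\tau$-tilting completion). You instead start from the classical Bongartz complement $Y$, show via the universal sequence that $Y$ is an Ext-projective object of ${}^{\perp}(\tau M)=\textrm{Fac}(M\oplus X)$, and then invoke the AIR bijection (Ext-projectives of a functorially finite torsion class $=$ add of the associated support $\tau$-tilting module) plus a counting argument to force $X\cong Y$; the tilting property and pd$_{A}X\leq 1$ then come for free from the classical construction. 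Both arguments are sound. Yours avoids the faithfulness step and the appeal to [15, Proposition 1.2], at the cost of importing the full universal-sequence construction and the Ext-projectivity computation; the paper's is more self-contained inside $\tau$-tilting theory and only touches the classical construction at the very last step. You also correctly flag the one delicate point: Ext-projectivity of $Y$ must be verified in ${}^{\perp}(\tau M)$ itself (which a priori could be larger than $\textrm{Fac}(M\oplus Y)$), and for that the tilting property of $M\oplus Y$ alone does not suffice --- the universal sequence is genuinely needed.
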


\begin{proof}
Note that pd$_{A}M\leq1$ since $M$ is a partial tilting $A$-module. Then we have $\Hom _{A}(DA,\tau M)=0$. This implies that $DA \in ^{\perp}$$(\tau M)=$Fac\,$T$ and $T$ is faithful. By Lemma 2.1, $T$ is a tilting $A$-module and pd\,$_{A}X\leq 1$.

\par We claim that $X$ is the Bongartz complement of $M$. In fact, assume $X=\oplus^{r}_{i=1}X_{i}$ is basic and $T[i]=M\oplus X[i]$.  By [15, Proposition 1.2], we only need to show that there is no surjection from any module in add\,$T[i]$ to $X_{i}$ for $i=1,2,...,r$. If there exists such a surjection, $X_{i}$ is generated by $T[i]$ and Fac\,$T$=Fac\,$T[i]$=$^{\perp}(\tau M)$. This implies that $X[i]$ is also the Bongartz $\tau$-tilting complement of $M$, a contradiction.
\end{proof}

\vskip0.1in

\begin{remark}
\par By Lemma $2.6$, we have a short exact sequence $0\rightarrow A \xrightarrow{f}T_{1}\xrightarrow{g}T_{2}\rightarrow 0$ with $T_{1},T_{2}\in$add\,$T$ and add\,$T_{1}\,\cap$add\,$T_{2}$=0. $f$ is injective since $A$ is cogenerated by $T$. Let us show that $X\in$add\,$T_{1}$. It is obvious that all $X_{i}\in$add\,$(T_{1}\oplus T_{2})$ since $T$ is a tilting $A$-module. If there exists some $X_{i}\in$add\,$T_{2}$, then $X_{i}$ is generated by $T_{1}$ and then by $T[i]$ since add\,$T_{1} \cap$ add\,$T_{2}$=0. This contradicts the fact that $X$ is the Bongartz complement of $M$. As a result, $X\in$add\,$T_{1}$ and $T_{2}\in$add\,$M$. This short exact sequence is the universal sequence constructed in [5].
\end{remark}

\vskip0.2in

Let $M$ be an almost complete tilting $A$-module. Then $M$ has at most two complements and it has exactly two complements if and only if it is faithful (see [15, 6]). By using the mutation of support $\tau$-tilting modules, we give a new proof of these results.

\begin{dl} {\rm [6, Proposition 2.3]}
Let $M$ be an almost complete tilting $A$-module. Then $M$ has exactly two complements if it is faithful. Otherwise, it has only one complement.
\end{dl}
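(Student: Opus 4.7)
The plan is to view $(M,0)$ as an almost complete support $\tau$-tilting pair and to enumerate the basic support $\tau$-tilting pairs extending it by means of Lemmas~2.3 and~2.4. By Theorem~A the Bongartz $\tau$-tilting complement $X$ of $M$ coincides with its Bongartz complement and $T=M\oplus X$ is a tilting $A$-module; since $|T|=|A|$ and $|M|=|A|-1$, the module $X$ is indecomposable, so $M$ has at least one complement. By Lemma~2.3 there is exactly one other basic support $\tau$-tilting pair $(T',P')$ extending $(M,0)$, and any complement $Z$ of $M$ yields a tilting (hence $\tau$-tilting) pair $(M\oplus Z,0)$ extending $(M,0)$, which must therefore coincide either with $(T,0)$ or with $(T',P')$. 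Hence $M$ admits a second complement precisely when $(T',P')=(M\oplus Z,0)$ for some indecomposable $Z$ with $M\oplus Z$ tilting.

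To identify $(T',P')$ I would apply Lemma~2.4 to $T=X\oplus M$, with exchange sequence $X\xrightarrow{f}M'\xrightarrow{g}Y\to 0$ in which $f$ is a minimal left add\,$M$-approximation. In case~(a), $M$ is not sincere, $Y=0$, and $(T',P')=(M,P)$ for some indecomposable projective $P$; here $M$ itself is not $\tau$-tilting and $M$ is certainly not faithful, so no second complement exists. In case~(b), $M$ is sincere, $Y=Y_1^d$ for some indecomposable $Y_1\notin\,$add\,$T$, and $(T',P')=(M\oplus Y_1,0)$ with $M\oplus Y_1$ a $\tau$-tilting module. In this second case the existence of a second complement is equivalent, by Lemma~2.1, to $M\oplus Y_1$ being faithful.

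The central claim that needs proof is therefore: $M\oplus Y_1$ is faithful if and only if $M$ is; the reverse implication being trivial, I would focus on showing that $M\oplus Y_1$ tilting forces $M$ faithful. From the exchange sequence, $Y_1^d$ is a quotient of $M'\in\,$add\,$M$, so $Y_1\in\,$Fac\,$M$ and hence Fac\,$(M\oplus Y_1)=$\,Fac\,$M$. If $M\oplus Y_1$ is tilting, then pd\,$_A(M\oplus Y_1)\leq 1$, and as in the proof of Theorem~A this forces $\Hom_A(DA,\tau(M\oplus Y_1))=0$, so $DA\in{}^\perp(\tau(M\oplus Y_1))=$\,Fac\,$(M\oplus Y_1)=$\,Fac\,$M$. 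Applying the duality $D$ to a surjection $M^n\twoheadrightarrow DA$ yields an injection $A\hookrightarrow (DM)^n$ of left $A$-modules, and since the left annihilator of $DM$ equals the right annihilator of $M$, we obtain $\mathrm{Ann}_A M=0$, i.e.\ $M$ is faithful.

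The main obstacle I expect is this last paragraph. The case analysis produced by Lemmas~2.3 and~2.4 is routine, but ruling out a second tilting complement when $M$ is not faithful requires combining the torsion-class identity Fac\,$(M\oplus Y_1)=$\,Fac\,$M$ (coming from the exchange sequence) with the duality argument that converts a surjection onto $DA$ into the vanishing of $\mathrm{Ann}_A M$.
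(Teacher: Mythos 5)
Your proposal is correct and its skeleton is the same as the paper's: view $(M,0)$ as an almost complete support $\tau$-tilting pair, invoke Lemma~2.3 to bound the number of complements by two, and split on whether the second pair has the form $(M,P)$ or $(M\oplus Y_1,0)$ according to the sincerity of $M$ via the exchange sequence of Lemma~2.4. The only genuine divergence is in the hardest sub-case ($M$ sincere but not faithful): the paper shows directly that $M\oplus Y_1$ cannot be tilting by taking an injection $A\to F$ with $F\in{\rm add}(M\oplus Y_1)$, covering $F$ by a surjection from ${\rm add}\,M$ (since $Y_1\in{\rm Fac}\,M$), and lifting along it using the projectivity of $A$ to cogenerate $A$ by $M$; you instead pass through the torsion-class identity ${\rm Fac}(M\oplus Y_1)={\rm Fac}\,M$ and the characterization ``$M$ faithful $\Leftrightarrow DA\in{\rm Fac}\,M$'' together with the duality/annihilator computation. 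Both are valid and of comparable length; the paper's version is slightly more elementary, while yours makes the underlying reason transparent (the two candidate complements generate the same torsion class, so they are simultaneously faithful or not). One small point to tighten: you use ${}^{\perp}(\tau(M\oplus Y_1))={\rm Fac}(M\oplus Y_1)$, which is not among the preliminaries quoted in the paper; either cite [1, Theorem 2.12] (every $\tau$-tilting module is its own Bongartz completion, so [1, Theorem 2.10] gives the equality), or bypass it entirely by noting that a tilting module is faithful by axiom (3), whence $DA\in{\rm Fac}(M\oplus Y_1)={\rm Fac}\,M$ without any mention of $\tau$.
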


\begin{proof}
Let $X$ be the Bongartz complement of $M$. $(M,0)$ is an almost complete support $\tau$-tilting pair. By Lemma 2.3, it is a direct summand of exactly two support $\tau$-tilting pairs. Obviously, one is $(M\oplus X,0)$ and the other is of the form $(M\oplus Y,0)$ with $Y$ indecomposable and $M\oplus Y$ $\tau$-tilting or $(M,P)$ with $P$ projective and $\Hom_{A}(P,M)=0$. In the first case, by Lemma 2.4, there exists an exact sequence $X\rightarrow M^{'}\rightarrow Y^{s}\rightarrow 0$  with  $M^{'}\in$add$M$ and some integer $s$. Note that if a tilting $A$-module $T$ contains $M$ as a direct summand, then the support $\tau$-tilting pair $(T,0)$ contains $(M,0)$ as a direct summand. Thus $M$ has at most two complements.

\par (a) Assume $M$ is faithful. Then $M$ is sincere and $\Hom_{A}(P,M)\neq 0$ for all projective $A$-modules $P$. So the other support $\tau$-tilting pair is $(M\oplus Y,0)$  and  $M\oplus Y$ is a tilting $A$-module since it is faithful. Thus $M$ has exactly two complements $X$ and $Y$.

\par (b) Assume $M$ is not faithful. If $M$ is not sincere, then $M\oplus Y$ is not sincere since $Y$ is generated by $M$. This implies that $M\oplus Y$ is not $\tau$-tilting because all $\tau$-tilting modules are sincere. Consequently the other support $\tau$-tilting pair is $(M,P)$ and $M$ has only one complement.

\par If $M$ is sincere, the other support $\tau$-tilting pair is $(M\oplus Y,0)$. We claim that $M\oplus Y$ is not tilting. Otherwise, $A$ is cogenerated by $M\oplus Y$. Let $g:A\rightarrow F$ be an injection with $F\in{\rm add}(M\oplus Y)$. Since $Y$ is generated by $M$, there exists a surjection $h:E\rightarrow F$ with $E\in$\,add\,$M$. Since $A$ is projective there exists $f: A\rightarrow E$ with $g=hf$, hence $f$ is injective and $A$ is cogenerated by $M$, which contradicts the assumption that $M$ is not faithful. In this case $M$ has only one complement.
\end{proof}

\vskip0.2in

Let $X$ and $Y$ be two nonisomorphic complements of an almost complete tilting $A$-module $M$. It is shown in [6] that they are connected by a nonsplit short exact sequence $0\rightarrow X\xrightarrow{f}M^{'}\xrightarrow{g}Y\rightarrow0$. Now we give a different way to construct this sequence.

\begin{dl} {\rm [6, Theorem 1.1]}
Let $X$ and $Y$ be two nonisomorphic complements of an almost complete tilting $A$-module $M$ and $\Ext^{1}_{A}(Y,X)\neq 0$. Then there exists a nonsplit short exact sequence $0\rightarrow X\xrightarrow{f}M^{'}\xrightarrow{g}Y\rightarrow0$, where $f$ is a minimal left add\,$M$-approximation and $g$ is a minimal right add\,$M$-approximation.
\end{dl}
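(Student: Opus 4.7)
The plan is to extract the desired sequence from the $\tau$-tilting mutation of Lemma 2.4 and then upgrade its conclusions to a short exact sequence with the required approximation properties on both sides.

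Since $M$ admits two nonisomorphic complements, Theorem 3.2 forces $M$ to be faithful, hence sincere. Up to relabelling, I may assume $X$ is the Bongartz complement of $M$; by Theorem 3.1, $X$ is then also the Bongartz $\tau$-tilting complement of $M$, so $T=M\oplus X$ is a basic tilting (hence $\tau$-tilting) $A$-module. Applying Lemma 2.4$(b)$ with $U=M$ and the indecomposable $X$ produces an exact sequence
\[
X\xrightarrow{f}M'\xrightarrow{g}Y_1\rightarrow 0,
\]
where $f$ is a minimal left add\,$M$-approximation, $Y_1$ is indecomposable with $Y_1\notin$ add\,$T$, and $M\oplus Y_1=\mu_X^-(T)$. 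Faithfulness of $M$ passes to $M\oplus Y_1$, so $M\oplus Y_1$ is a tilting module by Lemma 2.1, making $Y_1$ a complement of $M$ different from $X$. By Theorem 3.2, $Y_1\cong Y$.

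The main technical step is to show that $f$ is injective. Setting $N=\mathrm{Im}\,f$ and $K=\ker f$ I obtain short exact sequences $0\rightarrow N\rightarrow M'\rightarrow Y\rightarrow 0$ and $0\rightarrow K\rightarrow X\rightarrow N\rightarrow 0$. Applying $\Hom_A(-,X)$ to the first sequence and using $\Ext^1_A(M',X)=0$ (from $M\oplus X$ tilting) together with $\Ext^2_A(Y,X)=0$ (from pd\,$_A Y\leq 1$) yields $\Ext^1_A(N,X)=0$. Feeding this into the long exact sequence of $\Hom_A(-,X)$ applied to the second sequence forces the restriction map $\mathrm{End}_A(X)\twoheadrightarrow\Hom_A(K,X)$ to be surjective. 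If $K\neq 0$, the inclusion $K\hookrightarrow X$ then lifts to some $\phi\in\mathrm{End}_A(X)$ with $\phi|_K=\mathrm{id}_K$; iteration gives $\phi^n|_K=\mathrm{id}_K\neq 0$ for every $n$, so $\phi$ is not nilpotent, and locality of $\mathrm{End}_A(X)$ forces $\phi$ to be an automorphism. Writing $\phi=\mathrm{id}_X+\psi\pi$ with $\pi:X\twoheadrightarrow N$ and $\psi:N\rightarrow X$, I plan to contradict the left minimality of $f$ by showing that $f\circ\phi^{-1}$ is also a minimal left add\,$M$-approximation of $X$ and comparing it with $f$ via the uniqueness clause. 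This injectivity verification is the main technical obstacle I foresee.

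Once $f$ is injective, the sequence $0\rightarrow X\rightarrow M'\rightarrow Y\rightarrow 0$ cannot split, for otherwise $Y$ would be a direct summand of $M'\in$ add\,$M$, contradicting that $Y$ is a complement of $M$; the corresponding class in $\Ext^1_A(Y,X)$ is therefore nonzero, in accordance with the hypothesis. To see that $g$ is a right add\,$M$-approximation, I apply $\Hom_A(N_0,-)$ for any $N_0\in$ add\,$M$; the vanishing $\Ext^1_A(M,X)=0$ (from $M\oplus X$ tilting) gives surjectivity of $g_*:\Hom_A(N_0,M')\twoheadrightarrow\Hom_A(N_0,Y)$. Finally, for minimality of $g$, any endomorphism $\psi:M'\rightarrow M'$ satisfying $g\psi=g$ has $\mathrm{Im}(\psi-\mathrm{id}_{M'})\subseteq\ker g=X$, so $\psi-\mathrm{id}_{M'}$ factors through $X$; since $X\notin$ add\,$M$, any such map lies in the radical of $\mathrm{End}_A(M')$, so $\psi$ is an automorphism.
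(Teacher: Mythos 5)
Your overall strategy matches the paper's: take $X$ to be the Bongartz complement, extract the exchange sequence $X\xrightarrow{f}M'\rightarrow \mathrm{coker}f\rightarrow 0$ from the $\tau$-tilting mutation, identify the cokernel with the second complement, and then verify injectivity, minimality and the approximation properties. But the decisive step --- injectivity of $f$ --- is not proved, and the route you sketch cannot work. Surjectivity of the restriction map $\mathrm{End}_A(X)\rightarrow\Hom_{A}(K,X)$ tells you nothing about $K$: the inclusion $K\hookrightarrow X$ always lifts, namely to $\mathrm{id}_X$ itself, so the automorphism $\phi$ you produce may simply be the identity, and no contradiction with the minimality of $f$ can be extracted ($f\circ\phi^{-1}$ being another minimal left add\,$M$-approximation is automatic for any automorphism $\phi$ of $X$ and contradicts nothing). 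What is actually needed is that $X$ is cogenerated by $M$, and this is precisely where the faithfulness of $M$ must enter: the paper takes the universal Bongartz sequence $0\rightarrow A\rightarrow(M\oplus X)'\rightarrow M''\rightarrow 0$, pushes it out along an embedding $A\hookrightarrow F$ with $F\in{\rm add}\,M$, observes that the resulting sequence splits because $\Ext^{1}_{A}(M'',F)=0$, and concludes via the snake lemma that $(M\oplus X)'$, hence $X$, embeds into a module in ${\rm add}\,M$; the left add\,$M$-approximation $f$ is then forced to be injective. You invoke faithfulness only to get sincerity and never feed it into the injectivity argument, which is why that argument stalls.

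A second, smaller gap: the exchange-sequence lemma quoted from [1, Theorem 2.30] produces a cokernel that is a direct sum $Y_1^{s}$ of copies of an indecomposable module, not a single copy. To reach the short exact sequence $0\rightarrow X\rightarrow M'\rightarrow Y\rightarrow 0$ of the statement you must still show $s=1$; the paper does this by comparing $g$ with the $s$-fold direct sum of a minimal right add\,$M$-approximation of $Y$ and applying Krull--Schmidt, and your proposal omits this entirely. Your remaining verifications (non-splitness, $g$ being a right add\,$M$-approximation via $\Ext^{1}_{A}(M,X)=0$, and right minimality of $g$ via maps factoring through $X$ lying in the radical of $\mathrm{End}_A(M')$) are correct, and the last one is in fact slightly cleaner than the paper's nilpotency argument.
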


\begin{proof}
Let $X$ be the Bongartz complement of $M$. From the proof of Theorem 3.2, we know there exists an exact sequence $X\xrightarrow{f}M^{'}\xrightarrow{g} Y^{s}\rightarrow 0$ with $M^{'}\in$\,add\,$M$ and some integer $s$. Moreover, $f$ is a minimal left add\,$M$-approximation of $X$ and $g$ is a right add\,$M$-approximation of $Y^{s}$.

\par Firstly, we prove $f$ is an injection. This only needs to show $X$ is cogenerated by $M$. By the remark after Theorem 3.1, we get a short exact sequence $0\rightarrow A \rightarrow (M\oplus X)^{'}\rightarrow M^{''}\rightarrow 0$ with $(M\oplus X)^{'}\in {\rm add}(M\oplus X)$ and $M^{''}\in {\rm add} M$. Note that $M$ is faithful since it has two nonisomorphic complements. Let $\varphi :A\rightarrow F$  be an injection with $F\in {\rm add} M$. Then we have the following commutative diagram with exact rows.
\[ \xymatrix{
0\ar[r] &A \ar[d]^\varphi \ar[r] &(M\oplus X)^{'}\ar[d]^h \ar[r] &M^{''} \ar@{=}[d] \ar[r] &0\\
0\ar[r] &F \ar[r]                &E \ar[r]                       &M^{''}            \ar[r] &0}
\]
The lower sequence splits since $M$ has no self-extension, thus $E\cong F\oplus M^{''}$. Note that $\varphi$ is injective, by snake lemma, $h$ is an injection. Consequently $(M\oplus X)^{'}$ is cogenerated by $M$ and then $X$ is cogenerated by $M$.

\par Secondly, we show $g$ is right minimal, that is every $t\in$End\,$M^{'}$ such that $gt=g$ is an automorphism. Then there exists an endomorphism $\mu$ of $X$ that makes the following diagram commute. If $\mu$ is not an isomorphism, it must be nilpotent since $X$ is indecomposable and End\,$X$ is local.  So there
\[ \xymatrix{
0\ar[r] &X \ar@{.>}[d]^\mu \ar[r]^f &M^{'}\ar[d]^t \ar[r]^g &Y^{s} \ar@{=}[d] \ar[r] &0\\
0\ar[r] &X            \ar[r]^f &M^{'}         \ar[r]^g &Y^{s}            \ar[r] &0}
\]
exists some integer $m$ such that $\mu^{m}=0$. Then $t^{m}f=f\mu^{m}=0$ and so $t^{m}$ factors through $Y^{s}$, that is, there exists
$\alpha : Y^{s}\rightarrow M^{'}$ such that $t^{m}=\alpha g$. Because $gt^{m}=g$, we deduce that $g\alpha g=g$  and consequently $g\alpha =1_{Y^{s}}$ since $g$ is a surjection. This contradicts the fact that the sequence is not split. Thus $\mu$ is an isomorphism  and so is $t$.

\par Finally, we claim that $s=1$. Let $h:M_{0}\rightarrow Y$ be a minimal right ${\rm add}\ M$-approximation of $Y$ and $N={\rm Ker}\ h$. Then the map
\begin{equation*}
\psi =\left(
  \begin{array}{ccc}
   h&  & 0\\
    & \ddots &\\
    0&  & h
  \end{array}
\right) :M_{0}^{s}\longrightarrow Y^{s}
\end{equation*}
is a right add\,$M$-approximation of $Y^{s}$. Thus there is a decomposition $M_{0}^{s}=M^{'}\oplus M_{1}$ such that $\psi=(g,0)^{t}$. So there exists a map $\theta :N^{s}\rightarrow X\oplus M_{1}$ that makes the following diagram commute.
\[ \xymatrix{
0\ar[r] &N^{s} \ar@{.>}[d]^\theta \ar[r] & M_{0}^{s}\ar@{=}[d] \ar[r]^\psi &Y^{s} \ar@{=}[d] \ar[r] &0\\
0\ar[r] &X\oplus M_{1} \ar[r]^\phi                &M^{'}\oplus M_{1}\ar[r]^\psi                       &Y^{s}            \ar[r] &0}
\]
where \begin{equation*}
\phi =\left(
  \begin{array}{cc}
   f  & 0\\
    0&1
  \end{array}
\right).
\end{equation*}
\\

\par It follows that  $\theta$ is an isomorphism and $N^{s}\cong X\oplus M_{1}$. Thus we get $s=1$ since $X\notin$add\,$M_{1}$.
\end{proof}

\section{Tilting quiver and support $\tau$-tilting quiver}

Let $A$ be a finite dimensional algebra over an algebraically closed field $k$. In this section, we give a new proof of that the Hasse quiver associated to the poset of basic tilting $A$-modules coincides with the tilting quiver $\overrightarrow{Q}({\rm tilt} A)$. Moreover, when $A$ is hereditary, we calculate the number of arrows in $\overrightarrow{Q}({\rm tilt} A)$ and prove Theorem B.

\vskip0.2in

Riedtmann and Schofield define the tilting quiver $\overrightarrow{Q}({\rm tilt} A)$ in [15] as follows. The vertices are the isomorphism classes of basic tilting modules. There is an arrow $T_{1}\rightarrow T_{2}$ if $T_{1}=M\oplus X$, $T_{2}=M\oplus Y$ with $X,Y$ indecomposable and there exists a short exact sequence $0\rightarrow X \rightarrow M^{'}\rightarrow Y \rightarrow0$ with $M^{'}\in$\,add\,$M$. On the other hand, the set of basic tilting modules has a natural partial order given by $T_{1}>T_{2}$ if Fac\,$T_{1}\supseteq$Fac\,$T_{2}$. Happel and Unger have proved in [8] that the Hasse quiver associated to the poset of basic tilting modules coincides with the tilting quiver $\overrightarrow{Q}({\rm tilt} A)$.

\vskip0.2in

Note that tilting $A$-modules are also the vertices in the support $\tau$-tilting quiver  $\overrightarrow{Q}({\rm s}\tau$-${\rm tilt} A)$. Then we prove Happel and Unger's result in [8] from the viewpoint of support $\tau$-tilting modules.

\vskip0.1in

\begin{dl}{\rm [8, Theorem 4.1]}
The tilting quiver $\overrightarrow{Q}({\rm tilt} A)$ is the Hasse quiver of the poset of tilting $A$-modules.
\end{dl}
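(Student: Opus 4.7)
The plan is to transfer the problem to the support $\tau$-tilting quiver $\overrightarrow{Q}({\rm s}\tau$-${\rm tilt} A)$, which is already known to be the Hasse quiver of the support $\tau$-tilting poset (noted after Definition 2.2). Every basic tilting $A$-module $T$ gives a basic support $\tau$-tilting pair $(T,0)$, so the tilting modules sit inside the support $\tau$-tilting poset, and I will compare covering relations in the two posets by exploiting the fact that a $\tau$-tilting module is tilting precisely when it is faithful (Lemma 2.1).

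For the direction ``arrow in $\overrightarrow{Q}({\rm tilt} A)$ implies Hasse arrow in the tilting poset'', suppose $T_{1}\to T_{2}$ comes from a short exact sequence $0\to X\to M'\to Y\to 0$ with $T_{1}=M\oplus X$, $T_{2}=M\oplus Y$ and $M'\in {\rm add}\, M$. First I would observe that $Y$ is a quotient of $M'$, so $Y\in {\rm Fac}\, M\subseteq {\rm Fac}\, T_{1}$, giving $T_{2}\leq T_{1}$ in the tilting poset (with strict inequality since $T_{1}\not\cong T_{2}$). Next, $(M,0)$ is an almost complete support $\tau$-tilting pair with both $(T_{1},0)$ and $(T_{2},0)$ as completions, so by Lemma 2.3 these are the only two completions and they are adjacent in the support $\tau$-tilting Hasse quiver. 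Consequently no support $\tau$-tilting module, hence no tilting module, lies strictly between $T_{1}$ and $T_{2}$, so the arrow is a Hasse arrow in the tilting poset.

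For the converse, suppose $T_{1}\to T_{2}$ is a Hasse arrow in the tilting poset. I would first promote it to a cover in the support $\tau$-tilting poset by contradiction: if some support $\tau$-tilting $U$ satisfied $T_{1}>U>T_{2}$, then ${\rm Fac}\, U\supseteq {\rm Fac}\, T_{2}\ni A$ forces $U$ to be sincere (hence $\tau$-tilting over $A$ itself rather than over a proper quotient) and faithful, so by Lemma 2.1 $U$ would be a tilting module strictly between $T_{1}$ and $T_{2}$, contradicting the Hasse hypothesis. Once $T_{1}\to T_{2}$ is a cover in the support $\tau$-tilting quiver, Lemma 2.3 yields an almost complete summand $(M,0)$ with $T_{1}=M\oplus X$ and $T_{2}=M\oplus Y$; since $M$ has two distinct complements, Theorem 3.2 gives that $M$ is faithful, and then Lemma 2.4(b) combined with the argument used in the proof of Theorem 3.3 produces the required short exact sequence $0\to X\to M'\to Y\to 0$ with $M'\in {\rm add}\, M$, giving the tilting-quiver arrow. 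The main obstacle is precisely this contradiction step, where faithfulness must be inherited upward along the torsion-class order so that Lemma 2.1 can promote a hypothetical intermediate support $\tau$-tilting module to an actual tilting module; once this is in place, the rest is a routine assembly of Lemmas 2.3--2.4 and Theorems 3.2--3.3.
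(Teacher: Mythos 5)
Your overall strategy --- embedding the tilting poset into the support $\tau$-tilting poset, using Lemma 2.3 and the identification of the Hasse quiver with the mutation quiver to recognize covers, and then Theorems 3.2--3.3 to recover the exact sequence --- is essentially the paper's own proof, and your forward direction is fine (arguably cleaner than the paper's, which invokes [1, Proposition 2.26] where you use Lemma 2.3 directly). However, the step you yourself single out as the main obstacle rests on a false assertion: for a tilting module $T_{2}$ one does \emph{not} in general have $A\in{\rm Fac}\,T_{2}$. Since ${\rm Fac}\,T_{2}=\{X\mid \Ext^{1}_{A}(T_{2},X)=0\}$ for a classical tilting module, $A\in{\rm Fac}\,T_{2}$ would force $\Ext^{1}_{A}(T_{2},A)=0$, which fails for every non-projective tilting module; concretely, for $A=k(1\rightarrow 2)$ and $T_{2}=P_{1}\oplus I_{1}$ the simple projective module is not a quotient of any module in ${\rm add}\,T_{2}$, so $A\notin{\rm Fac}\,T_{2}$. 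Hence you cannot conclude that an intermediate support $\tau$-tilting module $U$ is sincere by arguing that $A$ lies in its torsion class.

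The conclusion you want is nevertheless correct, and the repair is exactly what the paper does via [1, Corollary 2.8]: ${\rm Fac}\,T_{2}$ is a \emph{faithful} torsion class in the sense that its annihilator is zero (it contains the faithful module $T_{2}$, even though it need not contain $A$). So if ${\rm Fac}\,T_{2}\subseteq{\rm Fac}\,U$, then $T_{2}$ is a quotient of a module in ${\rm add}\,U$, whence the annihilator of $U$ is contained in that of $T_{2}$, which is zero; thus $U$ is faithful, hence sincere, the projective part of its support $\tau$-tilting pair must vanish because $\Hom_{A}(P,U)\neq 0$ for every nonzero projective $P$ when $U$ is sincere, and Lemma 2.1 then makes $U$ a tilting module, giving the desired contradiction. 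With this substitution your argument goes through and coincides with the paper's.
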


\begin{proof}
Let $T_{1}\rightarrow T_{2}$ be an arrow in $\overrightarrow{Q}({\rm tilt} A)$. Then we assume that $T_{1}=M\oplus X$, $T_{2}=M\oplus Y$ with $X,Y$ indecomposable and there exists a short exact sequence $0\rightarrow X \rightarrow M^{'}\rightarrow Y \rightarrow 0$ with $M^{'}\in$\,add\,$M$. It is obvious that Fac\,$T_{2}$=Fac\,$(M\oplus Y)\subseteq$Fac\,$M\subseteq$Fac\,$(M\oplus X)$=Fac\,$T_{1}$. Now we show the inclusion is minimal. If there exists a tilting $A$-module $T_{3}$ such that Fac\,$T_{2}\subseteq$Fac\,$T_{3}\subseteq$Fac\,$T_{1}$, then by [1, Proposition 2.26],  we have add\,$T_{1}\cap$\,add\,$T_{2}$$\subseteq$\,add\,$T_{3}$. Since add\,$ T_{1}\cap\,$add\,$T_{2}$=add\,$M$, we know $T_{3}=M\oplus X$ or  $T_{3}=M\oplus Y$.

\par Let Fac\,$T_{2}\subseteq$Fac\,$T_{1}$ be a minimal inclusion, that is  there is no tilting $A$-module $T_{3}$ $(T_{3}\ncong T_{1},\,T_{2})$ such that Fac\,$T_{2}\subseteq$Fac\,$T_{3}\subseteq$Fac\,$T_{1}$. Note that $T_{1},T_{2}\in$$\overrightarrow{Q}(s$$\tau$-$tiltA)_{0}$. Assume there exists a support $\tau$-tilting $A$-module $T$ such that Fac\,$T_{2}\subseteq$Fac\,$T\subseteq$Fac\,$T_{1}$. If $a\in A$ satisfies $a$Fac\,$T$=0, then we have $a$Fac\,$T_{2}=0$. According to  [1, Corollary 2.8], there is a bijection $T\rightarrow$Fac\,$T$ between basic tilting modules and faithful functorially finite torsion classes. Then we get $a=0$, and this implies that $T$ is a tilting $A$-module, a contradiction. Thus the inclusion Fac\,$T_{2}\subseteq$Fac\,$T_{1}$ is minimal with respect to the partial order of support $\tau$-tilting $A$-modules. As support $\tau$-tilting $A$-modules, $T_{2}$ is a left mutation of $T_{1}$ since Fac\,$T_{2}\subseteq$Fac\,$T_{1}$. By Lemma 2.4 and Theorem 3.3, there exists a short exact sequence $0\rightarrow X \rightarrow M^{'}\rightarrow Y \rightarrow0$ with $M^{'}\in$add\,$M$ and $T_{1}=M\oplus X$, $T_{2}=M\oplus Y$. It follows that there is an arrow $T_{1}\rightarrow T_{2}$ in $\overrightarrow{Q}({\rm tilt} A)$.
\end{proof}

\vskip0.1in

From the proof of Theorem 4.1 we can get the following result.

\vskip0.1in

\begin{dl}
The tilting quiver $\overrightarrow{Q}({\rm tilt} A)$ can be embedded into the support $\tau$-tilting quiver $\overrightarrow{Q}({\rm s}\tau$-${\rm tilt} A)$.
\end{dl}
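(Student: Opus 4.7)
The plan is to read off the embedding directly from the proof of Theorem 4.1. On vertices, every basic tilting $A$-module is in particular a basic support $\tau$-tilting $A$-module (a tilting module has $|T|=|A|$, is $\tau$-rigid, so it is $\tau$-tilting), so the set of vertices of $\overrightarrow{Q}({\rm tilt} A)$ sits inside the set of vertices of $\overrightarrow{Q}({\rm s}\tau\text{-}{\rm tilt} A)$; this inclusion is visibly injective.

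For arrows, I would argue as follows. Let $T_{1}\to T_{2}$ be an arrow in $\overrightarrow{Q}({\rm tilt} A)$. By Theorem 4.1, $\text{Fac}\,T_{2}\subseteq\text{Fac}\,T_{1}$ is a minimal inclusion \emph{within the poset of tilting modules}. The key observation, already extracted in the second paragraph of the proof of Theorem 4.1, is that this inclusion is also minimal within the poset of all basic support $\tau$-tilting modules: if $T$ were a support $\tau$-tilting module strictly between them, then any element of $A$ annihilating $\text{Fac}\,T$ would annihilate $\text{Fac}\,T_{2}$, but $\text{Fac}\,T_{2}$ is faithful because $T_2$ is tilting, so $\text{Fac}\,T$ is faithful, forcing $T$ itself to be a tilting module by Lemma~2.1 together with the bijection between basic support $\tau$-tilting modules and functorially finite torsion classes. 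This contradicts the minimality in the tilting poset. Hence $T_{2}$ is a left mutation of $T_{1}$ in the sense of Section~2, giving an arrow $T_{1}\to T_{2}$ of $\overrightarrow{Q}({\rm s}\tau\text{-}{\rm tilt} A)$.

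Conversely, if $T_{1}$ and $T_{2}$ are tilting modules and there is an arrow $T_{1}\to T_{2}$ in $\overrightarrow{Q}({\rm s}\tau\text{-}{\rm tilt} A)$, then $\text{Fac}\,T_{2}\subset\text{Fac}\,T_{1}$ is minimal among support $\tau$-tilting modules, and a fortiori minimal among tilting modules, so by Theorem~4.1 we get an arrow $T_{1}\to T_{2}$ in $\overrightarrow{Q}({\rm tilt} A)$. Together with the injectivity on vertices, this shows that the map is a full embedding of quivers.

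The only delicate point is the faithfulness argument that upgrades an intermediate support $\tau$-tilting module to a tilting module; but this is exactly what was already done inside the proof of Theorem~4.1, so the theorem really is a direct corollary and I do not anticipate additional obstacles.
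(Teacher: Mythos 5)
Your proposal is correct and follows essentially the same route as the paper: the embedding is read off from the proof of Theorem 4.1, the key point being that a minimal inclusion $\mathrm{Fac}\,T_{2}\subseteq\mathrm{Fac}\,T_{1}$ between tilting modules stays minimal in the support $\tau$-tilting poset because any intermediate torsion class is faithful and hence comes from a tilting module. Your extra observation that the embedding is full is a harmless bonus beyond what the statement asks.
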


\vskip0.2in

From now on, we assume that $A=kQ$ is a finite dimensional hereditary algebra. In general, the tilting quiver $\overrightarrow{Q}({\rm tilt} A)$ of $A$ may not be connected. For example, the tilting quiver $\overrightarrow{Q}({\rm tilt} A)$ is two disjoint rays when $A$ is the Kronecker algebra. However, the support $\tau$-tilting quiver $\overrightarrow{Q}({\rm s}\tau$-${\rm tilt} A)$ of $A$ is always connected.

\begin{dl}
Let $A$ be a finite dimensional hereditary algebra. Then the support $\tau$-tilting quiver $\overrightarrow{Q}({\rm s}\tau$-${\rm tilt} A)$ of $A$  is connected.
\end{dl}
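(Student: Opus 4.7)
The plan is to show that every vertex of $\overrightarrow{Q}({\rm s}\tau$-${\rm tilt}\,A)$ can be connected by a finite undirected path to the pair $(0,A)$, which is the unique minimum of the poset ${\rm s}\tau$-${\rm tilt}\,A$ because ${\rm Fac}\,0=0$ is the smallest functorially finite torsion class (and by Section 2 the mutation quiver coincides with the Hasse quiver of this poset). I would proceed by induction on $n=|A|$, with the case $n=1$ being trivial.

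For the inductive step, I stratify the support $\tau$-tilting pairs according to the projective part: any pair $(T,P)$ with $P\ne 0$ is naturally a $\tau$-tilting module over the hereditary quotient algebra $A/\langle e\rangle$, where $e$ is a nonzero idempotent with $eA\subseteq P$. Since $A/\langle e\rangle$ has rank strictly less than $n$, its support $\tau$-tilting quiver is connected by the inductive hypothesis, and all such subquivers of $\overrightarrow{Q}({\rm s}\tau$-${\rm tilt}\,A)$ share the common vertex $(0,A)$. Hence their union $\mathcal{S}=\{(T,P):P\ne 0\}\cup\{(0,A)\}$ is a connected subquiver of $\overrightarrow{Q}({\rm s}\tau$-${\rm tilt}\,A)$. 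It then remains to connect each tilting module $(T,0)$ to $\mathcal{S}$. For $T=A=\bigoplus_{j=1}^{n}P_j$, mutation of $(A,0)$ at any indecomposable projective summand $P_i$ yields the pair $(A[i],\,P_i)\in\mathcal{S}$: since $A[i]=\bigoplus_{j\ne i}P_j$ has zero support at vertex $i$, Lemma~2.4(a) applies and the exchange lands in the non-tilting region. For a general tilting module $T$, I would first attempt to reach $A$ through a chain of tilting mutations (staying within $\overrightarrow{Q}({\rm tilt}\,A)$, using Theorem~4.2); if $T$ lies in a different connected component of the tilting quiver, one instead seeks a bridging mutation from some tilting $T'$ within that component, namely a summand $X$ of $T'$ with $T'\setminus X$ not sincere, so that $(T',0)$ mutates into $\mathcal{S}$ via Lemma~2.4(a).

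The main obstacle is verifying the bridging claim for every connected component of $\overrightarrow{Q}({\rm tilt}\,A)$: for non-Dynkin hereditary algebras there exist tilting modules whose every summand complement is sincere (for instance, a tilting module in the Kronecker algebra with summand dimension vectors $(1,2)$ and $(2,3)$), and such tilting modules are ``landlocked'' among tilting neighbors. To resolve this I would analyze the structure of the tilting-quiver components by using the reflection functors at sources of $Q$, which induce isomorphisms between the support $\tau$-tilting quivers of different orientations of $Q$; reducing to a convenient orientation should then allow one to identify, in each tilting-quiver component, a boundary tilting module admitting a non-sincere summand complement and thus providing the required bridge into $\mathcal{S}$.
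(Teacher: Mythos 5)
Your proposal takes a completely different route from the paper (the paper transports the whole problem to the duplicated algebra $\overline{A}$: it composes the bijection of [2, Theorem 10] between tilting $\overline{A}$-modules and cluster-tilting objects in $\mathcal{C}_A$ with Lemma 2.5, checks that this bijection respects the partial orders, and then quotes the known connectedness of $\overrightarrow{Q}({\rm tilt}\,\overline{A})$). Your inductive stratification by the projective part is a reasonable alternative skeleton, and the pieces you do establish are sound: $(0,A)$ is the unique minimum of the poset, the stratum of pairs with $P\neq 0$ decomposes into copies of smaller support $\tau$-tilting quivers glued along $(0,A)$ (this needs $\tau$-tilting reduction \`a la Jasso to see that the Hasse-quiver structure restricts correctly, but that is available), and $(A,0)$ does mutate into $\mathcal{S}$ via Lemma 2.4(a).

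However, the argument has a genuine gap exactly where you flag it, and the flag does not close it. To finish you must show that \emph{every} connected component of $\overrightarrow{Q}({\rm tilt}\,A)$ contains a tilting module $T'$ with an indecomposable summand $X$ such that $T'[X]$ is not sincere, i.e.\ a non-saturated vertex in the sense of Section 5. This is precisely [17, Theorem 3.1] (Theorem 5.2 of the paper), and it is not a routine fact: the paper itself \emph{deduces} it from the connectedness of $\overrightarrow{Q}({\rm s}\tau$-${\rm tilt}\,A)$, so you cannot borrow that proof without circularity, and you would have to supply Unger's original argument or an independent one. Your proposed fix via reflection functors is only a hope: reflection functors at sources permute orientations of $Q$ and act on (most of) the tilting modules, but they do not obviously carry a given connected component of the tilting quiver to one containing $A$ or $DA$, and for wild quivers there are components consisting entirely of regular tilting modules (cf.\ Lemma 5.4 of the paper) where the existence of a non-sincere almost complete summand is a genuinely nontrivial theorem. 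As written, the proof establishes connectedness of $\mathcal{S}\cup\{\text{components of }\overrightarrow{Q}({\rm tilt}\,A)\text{ meeting }\mathcal{S}\}$ but leaves open whether some tilting component is entirely ``landlocked,'' which is the actual content of the theorem.
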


\begin{proof}
Let $\overline{A}$ be the duplicated algebra of a hereditary algebra $A$ and $\overline{P}$ be the direct sum of all nonisomorphic indecomposable projective-injective $\overline{A}$-modules. For an $\overline{A}$-module $M$, we denote by $\Omega_{\overline{A}}M$ and $\Omega^{-1}_{\overline{A}}M$ respectively its first syzygy and first cosyzygy. We set $\Sigma_{1}=\{\,\Omega^{-1}_{\overline{A}}P \,|$ $P $ is an indecomposable projective A-module \}. Let $T$ be a tilting $\overline{A}$-module, we have a decomposition $T=T_{1}\oplus T_{2}\oplus \overline{P}$ with $T_{1}\in$mod-$A$ and $T_{2}\in$add\,$\Sigma_{1}$. By [2, Theorem 10], we have a bijection between tilting $\overline{A}$-modules and cluster tilting objects in $\mathcal{C}_{A}$. On the other hand, by Lemma 2.5, we get a bijection between cluster tilting objects in $\mathcal{C}_{A}$ and support $\tau$-tilting $A$-modules since $A$ is a cluster tilting object in $\mathcal{C}_{A}$.Thus there is a bijection between tilting $\overline{A}$-modules  and support $\tau$-tilting $A$-modules, sending $T=T_{1}\oplus T_{2}\oplus \overline{P}$ to $(T_{1},  \Omega_{\overline{A}}T_{2})$.

\par Then we prove there is a quiver isomorphism between $\overrightarrow{Q}({\rm tilt} \overline{A})$ and $\overrightarrow{Q}({\rm s}\tau$-${\rm tilt} A)$. It only needs to show the Hasse quiver of the poset of tilting $\overline{A}$-modules corresponds to that of support $\tau$-tilting $A$-modules.

\par Let $T$ and $T^{'}$ be tilting $\overline{A}$-modules and Fac\,$T^{'}$$\subseteq$Fac\,$T$. Then we have $ T^{'}_{1}\in$ Fac\,$(T_{1}\oplus T_{2}\oplus \overline{P})$. Since $T_{1},T^{'}_{1}\in$mod-$A$ and $T_{2}, \overline{P}\notin$mod-$A$, we get $T^{'}_{1}\in$Fac\,$T_{1}$ and then Fac\,$T^{'}_{1}\subseteq$Fac\,$T_{1}$.

\par Conversely, assume Fac\,$T^{'}_{1}\subseteq$Fac\,$T_{1}$. Since $T^{'}_{2}\notin$mod-$A$, we have $T^{'}_{2}\in$Fac\,$\overline{P}$. This implies that $ T^{'}_{1}\oplus T^{'}_{2}\oplus \overline{P}\in$Fac\,$(T_{1}\oplus T_{2}\oplus \overline{P})$ and then Fac\,$T^{'}\subseteq$Fac\,$T$.

\par According to [19, Proposition 4.1], we know that the tilting quiver $\overrightarrow{Q}({\rm tilt} \overline{A})$  of $\overline{A}$ is connected,  and hence the support $\tau$-tilting quiver $\overrightarrow{Q}({\rm s}\tau$-${\rm tilt} A)$ is connected.
\end{proof}

\vskip0.1in

\begin{example}
Let $A=kQ$ be the Kronecker algebra with $Q:1\leftleftarrows 2$. Then the support $\overrightarrow{Q}({\rm s}\tau$-${\rm tilt} A)$ is as follows.
\renewcommand\arraycolsep{0.1cm}
\newcommand{\one}{{\begin{array}{c}\vspace{-4pt}222\\\vspace{-4pt}11\\\end{array}}}
\newcommand{\two}{{\begin{array}{c}\vspace{-4pt}22\\\vspace{-4pt}1\\\end{array}}}
\newcommand{\three}{{\begin{array}{c}\vspace{-4pt}2\\\vspace{-4pt}11\\\end{array}}}
\newcommand{\four}{{\begin{array}{c}\vspace{-4pt}22\\\vspace{-4pt}111\\\end{array}}}
\newcommand{\five}{{\begin{array}{c}\vspace{-4pt}222\\\vspace{-4pt}1111\\\end{array}}}

\[\dots \rightarrow \one\oplus\two \rightarrow \two \oplus2 \rightarrow 2 \rightarrow 0 \leftarrow1 \leftarrow 1\oplus \three \rightarrow \three \oplus \four\rightarrow\dots\]
\end{example}

\vskip0.1in

Let $Q$ be a Dynkin quiver with $n$ vertices and  $A=kQ$ be the path algebra.  It is known that the number $a_{n}(Q)$ of basic tilting $A$-modules is independent of the orientation of $Q$. This implies that the number of vertices in $\overrightarrow{Q}({\rm tilt} A)$ is a constant for all Dynkin quivers of the same type.
By [12, Theorem 0.1], the number of arrows in $\overrightarrow{Q}({\rm tilt} A)$ is also a constant. By using the support $\tau$-tilting quiver $\overrightarrow{Q}({\rm s}\tau$-${\rm tilt} A)$, we give a new method to calculate the number of these arrows.

\vskip0.1in

\begin{co} {\rm [12, Theorem 0.1]}
Let $Q$ be a Dynkin quiver with $n$ vertices and $A=kQ$.  Then the number of arrows in $\overrightarrow{Q}({\rm tilt} A)$ (denoted by $\#\overrightarrow{Q}({\rm tilt} A)_{1})$ does not depend on the orientation of $Q$. In particular,
$$ \#\overrightarrow{Q}({\rm tilt} A)_{1}=\left\{
\begin{array}{ll}
C^{n+1}_{2n-1}     & \quad if\ Q=A_{n}\\
(3n-4)C^{n-3}_{2n-4} &\quad if\ Q=D_{n}    \\
1140     &\quad if\ Q=E_{6}     \\
8008      &\quad if\ Q=E_{7} \\
66976     &\quad  if\ Q=E_{8}
\end{array} \right. $$
\end{co}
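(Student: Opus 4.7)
My plan is to count the arrows of $\overrightarrow{Q}({\rm tilt}\,A)$ inside the larger quiver $\overrightarrow{Q}({\rm s}\tau\text{-}{\rm tilt}\,A)$ into which it embeds (Theorem~4.2), and to reduce the count to a double-counting identity involving only $a_{n}(Q)$ and $a_{n-1}(Q)$. Concretely, I plan to establish the identity
\[
\#\overrightarrow{Q}({\rm tilt}\,A)_{1} \;=\; \frac{n\cdot a_{n}(Q)-a_{n-1}(Q)}{2}
\]
for any hereditary $A=kQ$ with $n$ vertices; the independence from the orientation of $Q$ and the closed forms for the various Dynkin types will then follow at once from the tabulated values of Lemma~2.7.

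To establish the identity, I fix a tilting module $T$ and count the edges of $\overrightarrow{Q}({\rm s}\tau\text{-}{\rm tilt}\,A)$ incident to $(T,0)$. Each of the $n$ indecomposable summands $X$ of $T$ yields by Lemma~2.3 a unique almost complete support $\tau$-tilting pair $(T/X,0)$ with exactly two completions, so $T$ has exactly $n$ incident edges. Lemma~2.4 sorts these into two classes: if $U=T/X$ is sincere, the other completion is a $\tau$-tilting module, which in the hereditary setting is automatically tilting by the identification of support $\tau$-tilting modules with support tilting modules recalled before Lemma~2.7; if $U$ is not sincere, the other completion has the form $(U,P_{i})$ with $P_{i}$ the indecomposable projective at the unique vertex missing from the support of $U$, so the module part has exactly $n-1$ summands. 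Moreover, each support $\tau$-tilting module with $n-1$ summands arises in exactly one such way, since removing its single projective summand produces the corresponding tilting module via the Bongartz completion; the remaining $n-1$ edges terminate at pairs with nontrivial projective part and hence do not meet any tilting vertex. Therefore the ``tilting--to--smaller'' arrows incident to tilting vertices are in bijection with the $a_{n-1}(Q)$ support $\tau$-tilting modules with $n-1$ summands, and double counting the $n$ incident edges at each tilting vertex (with each tilting--tilting edge contributing twice and each tilting--to--smaller edge contributing once) yields
\[
n\cdot a_{n}(Q) \;=\; 2\cdot\#\overrightarrow{Q}({\rm tilt}\,A)_{1} + a_{n-1}(Q).
\]

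It remains to substitute from Lemma~2.7 and simplify. For $A_{n}$ I would use $\binom{2n}{n}=2\binom{2n-1}{n-1}$ to rewrite $\tfrac{1}{2}(n\,a_{n}-a_{n-1})$ as $\tfrac{n-1}{n+1}\binom{2n-1}{n-1}=\binom{2n-1}{n+1}$; for $D_{n}$ an analogous binomial manipulation yields $(3n-4)\binom{2n-4}{n-3}$; and the three exceptional types reduce to immediate arithmetic in the tabulated values. The main obstacle is the first alternative in the classification above: I must rule out the possibility that Lemma~2.4(b) yields a $\tau$-tilting module that is not tilting, since any such ``phantom'' case would break the intended bijection with $\overrightarrow{Q}({\rm tilt}\,A)_{1}$. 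For hereditary $A$ this follows from the identification cited, equivalently from the fact that every basic rigid $A$-module with $n$ indecomposable summands is faithful; after this point the proof is entirely bookkeeping.
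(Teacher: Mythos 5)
Your proposal is correct and follows essentially the same route as the paper: embed $\overrightarrow{Q}({\rm tilt}\,A)$ into $\overrightarrow{Q}({\rm s}\tau$-${\rm tilt}\,A)$, use the fact that every vertex there has exactly $n$ neighbours and that every support $\tau$-tilting module with $n-1$ summands is adjacent to exactly one tilting module, and double-count to get $\#\overrightarrow{Q}({\rm tilt}\,A)_{1}=\frac{1}{2}\bigl(n\,a_{n}(Q)-a_{n-1}(Q)\bigr)$ before substituting the tabulated values. Your extra remarks (that $\tau$-tilting equals tilting in the hereditary case, and the "exactly one" adjacency via non-sincerity of the smaller module) only make explicit steps the paper leaves implicit, so there is nothing further to add.
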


\begin{proof}
We regard $\overrightarrow{Q}({\rm tilt} A)$ as a subquiver of $\overrightarrow{Q}({\rm s}\tau$-${\rm tilt} A)$. By Lemma 2.3, each vertex in $\overrightarrow{Q}({\rm s}\tau$-${\rm tilt} A)$ has exactly $n$ neighbours. Let $T$ be a tilting $A$-module, then the neighbours of $T$ in $\overrightarrow{Q}({\rm s}\tau$-${\rm tilt} A)$ are tilting $A$-modules or support $\tau$-tilting $A$-modules with $n-1$ nonisomorphic indecomposable direct summands. Note that each support $\tau$-tilting $A$-modules with $n-1$ nonisomorphic indecomposable direct summands is connected with exactly one tilting $A$-module by an arrow in $\overrightarrow{Q}({\rm s}\tau$-${\rm tilt} A)$. Then we get that
$$\#\overrightarrow{Q}({\rm tilt} A)_{1}=\frac{1}{2}(a_{n}(Q)\times n-a_{n-1}(Q)).$$
By Lemma 2.6, we can calculate the number of arrows in $\overrightarrow{Q}({\rm tilt} A)$ and this number is independent of the orientation of $Q$.
\end{proof}

\section{Non-saturated vertices in tilting quiver}

Let $Q$ be a quiver with $n$ vertices and $A=kQ$ be the finite dimensional hereditary algebra over an algebraically closed field $k$.
In this section, by using support $\tau$-tilting quiver, we give new proofs for some Happel and Unger's results. Moreover, we prove the conjecture of Happel and Unger  in [9] when $Q$ is a Dynkin quiver, a Euclidean quiver and a wild quiver with two or three vertices, and we also provide a counterexample for this conjecture when $Q$ is a wild quiver with four vertices.

\vskip0.2in

Let $T$ be a tilting $A$-module, we denote by $s(T)$(respectively $e(T)$) the number of arrows starting (respectively ending) at $T$ in the tilting quiver $\overrightarrow{Q}({\rm tilt} A)$. For a support $\tau$-tilting $A$-module $M$, by Lemma 2.3, the number of arrows starting or ending at $M$ in $\overrightarrow{Q}({\rm s}\tau$-${\rm tilt} A)$ is equal to $n$. Since $\overrightarrow{Q}({\rm tilt} A)$ can be embedded into $\overrightarrow{Q}({\rm s}\tau$-${\rm tilt} A)$, we have $s(T)+e(T)\leq n$. We say $T$ is saturated if $s(T)+e(T)=n$.

\vskip0.2in

The following result in [9] is a sufficient and necessary condition for a tilting $A$-module to be saturated in $\overrightarrow{Q}({\rm tilt} A)$. Here we give a new proof by using support $\tau$-tilting quiver.

\vskip0.1in

\begin{dl}{\rm [9,  Propostion 3.2]}
Let $T$ be a basic tilting $A$-module. It is saturated if and only if $(\underline{dim}\,T)_{i}\geq 2$ for all $1\leq i \leq n$.
\end{dl}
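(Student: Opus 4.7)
The plan is to reduce the saturation condition through a chain of equivalences until it becomes the claimed inequality on the dimension vector, relying at the end on the classical fact that the indecomposable summands of a tilting module form a $\mathbb{Z}$-basis of the Grothendieck group.

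First I would reformulate saturation via the support $\tau$-tilting quiver. Since $\overrightarrow{Q}({\rm tilt}\,A)$ embeds into $\overrightarrow{Q}({\rm s}\tau\text{-}{\rm tilt}\,A)$ (Theorem 4.2) and each vertex of the latter has exactly $n$ neighbours by Lemma 2.3, we have $s(T)+e(T)\le n$ with equality iff every mutation-neighbour of $T$ in $\overrightarrow{Q}({\rm s}\tau\text{-}{\rm tilt}\,A)$ is a tilting module. The neighbour at position $i$ corresponds to the unique second completion of the almost complete support $\tau$-tilting pair $(T[i],0)$, which is a tilting module iff $T[i]$ admits a second tilting completion; by Theorem 3.2 of the paper this happens iff $T[i]$ is faithful. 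Hence $T$ is saturated iff $T[i]$ is faithful for every $i$.

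Next, over the hereditary algebra $A$, I would show that faithfulness of $T[i]$ is equivalent to sincerity. The direction faithful $\Rightarrow$ sincere is general. For the converse, if $T[i]$ is sincere I apply Lemma 2.4(b) to the Bongartz $\tau$-tilting extension $T[i]\oplus B$: the exchange sequence yields another $\tau$-tilting module. Every $\tau$-tilting module over a hereditary algebra is already tilting (a rigid module with $n$ indecomposable summands cannot be supported on a proper quotient $A/\langle e_j\rangle$, whose rank is $n-1$, so it must be sincere, and then Bongartz's theorem forces it to coincide with its tilting extension). Thus $T[i]$ has two tilting completions and is faithful by Theorem 3.2. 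Combining the two reductions, $T$ is saturated iff $T[i]$ is sincere for every $i$.

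The direction $(\Rightarrow)$ of the theorem is now immediate: if every $T[i]$ is sincere, then for each $j$ pick $i_{0}$ with $(\underline{\dim}\,T_{i_{0}})_{j}\ge 1$ (which exists since $T$ itself is sincere); sincerity of $T[i_{0}]$ gives $(\underline{\dim}\,T[i_{0}])_{j}\ge 1$, and adding the two contributions yields $(\underline{\dim}\,T)_{j}\ge 2$.

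For the converse, which is the main obstacle, I assume $(\underline{\dim}\,T)_{j}\ge 2$ for every $j$ and fix a pair $(i,j)$. If $(\underline{\dim}\,T_{i})_{j}\le 1$ then $(\underline{\dim}\,T[i])_{j}\ge(\underline{\dim}\,T)_{j}-1\ge 1$ is immediate. If $(\underline{\dim}\,T_{i})_{j}\ge 2$, I must rule out the possibility that $T_{i}$ is the unique summand of $T$ containing $S_{j}$ as a composition factor. Here I would invoke the key linear-algebraic fact that the classes $\{[T_{i}]\}_{i=1}^{n}$ form a $\mathbb{Z}$-basis of $K_{0}({\rm mod}\text{-}A)\cong\mathbb{Z}^{n}$, so the integer matrix $C=[(\underline{\dim}\,T_{i})_{j}]_{i,j}$ satisfies $\det C=\pm 1$. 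If $T_{i}$ were the only summand containing $S_{j}$, then column $j$ of $C$ would have a single nonzero entry $(\underline{\dim}\,T_{i})_{j}\ge 2$; cofactor expansion along this column would then force $(\underline{\dim}\,T_{i})_{j}$ to divide $\pm 1$, a contradiction. Hence some other summand $T_{i'}$ satisfies $(\underline{\dim}\,T_{i'})_{j}\ge 1$, and $(\underline{\dim}\,T[i])_{j}\ge 1$. This determinant argument ruling out ``monopolisation'' of a simple by a single indecomposable summand is the main technical subtlety of the proof.
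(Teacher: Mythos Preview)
Your proof is correct. For the forward direction you and the paper argue essentially the same way, via non-sincerity of some $T[k]$ when a coordinate of $\underline{\dim}\,T$ equals~$1$. For the converse you take a genuinely different route. The paper assumes $T$ is not saturated, so some neighbour in $\overrightarrow{Q}({\rm s}\tau\text{-}{\rm tilt}\,A)$ is a pair $(M,P)$ with $P=P_j$ indecomposable projective and $T=M\oplus X$; it then invokes Lemma~2.6 to build a short exact sequence $0\to P\to T_1\to T_2\to 0$ with $T_1,T_2\in{\rm add}\,T$ and ${\rm add}\,T_1\cap{\rm add}\,T_2=0$, and a dimension count using $\Hom_A(P,M)=0$ and ${\rm End}(P)\cong k$ forces $T_1=X\oplus M_1$ with $(\underline{\dim}\,X)_j=1$, hence $(\underline{\dim}\,T)_j=1$. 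You instead pass through the equivalence ``saturated $\Leftrightarrow$ every $T[i]$ sincere'' (using that $\tau$-tilting coincides with tilting over hereditary $A$, together with Lemma~2.4(b) and Theorem~3.2) and then rule out the possibility that a single summand $T_i$ monopolises a simple $S_j$ via the unimodularity of the matrix of dimension vectors. Your argument is more elementary in that it bypasses the approximation machinery of Lemma~2.6, relying only on the classical fact that the classes of the indecomposable summands of a tilting module form a $\mathbb{Z}$-basis of $K_0$; the paper's argument stays entirely within the $\tau$-tilting toolkit developed earlier and does not need this Grothendieck-group input.
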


\begin{proof}
Assume that $T=\oplus^{n}_{i=1}T_{i}$ is saturated and there is some $i$ with $(\underline{dim}\,T)_{i}=1$. Then there must be an indecomposable summand $T_{k}$ of $T$ such that $(\underline{dim}\,T_{k})_{i}=1$. So we have a decomposition $T=T[k]\oplus T_{k}$ with $(\underline{dim}\,T[k])_{i}=0$. This implies that $T[k]$ is a non-sincere almost complete tilting $A$-module and it has only one complement. Then $T$ is not saturated, a contradiction.

\par Conversely, assume $(\underline{dim}\,T)_{i}\geq 2$ for all $1\leq i \leq n$. If $T$ is not saturated, there exists an arrow $T\rightarrow (M,P)$ in $\overrightarrow{Q}({\rm s}\tau$-${\rm tilt} A)$ where $T=M\oplus X$ with $X$ indecomposable and $P$ is an indecomposable projective $A$-module. By Lemma 2.6, we get a short exact sequence $0\rightarrow P\xrightarrow{f}T_{1}\xrightarrow{g}T_{2}\rightarrow 0$ with $T_{1}, T_{2}\in$add\,$T$ and add\,$T_{1}\cap$\,add\,$T_{2}=0$. $f$ is an injection since $P$ is cogenerated by $T$. Note that $f\neq 0$ and $\Hom_{A}(P,M)=0$, then we get $T_{1}=X^{s}\oplus M_{1}$ for some integer $s$ and $M_{1}, T_{2}\in$add\,$M$. Applying $\Hom_{A}(P,-)$ to the above short exact sequence, we get $\Hom_{A}(P,T_{1})\cong\Hom_{A}(P,P)\cong k$. This implies that $s=1$ and $(\underline{dim}\,X)_{i}=1$ for some integer $i\in (1,n)$. It is obvious that $(\underline{dim}\,M)_{i}=0$, then we have $(\underline{dim}\,T)_{i}=(\underline{dim}\,M)_{i}+(\underline{dim}\,X)_{i}=1$, a contradiction.
\end{proof}

\vskip0.1in

\begin{remark}
Let $i$ be a source vertex of $Q_{0}$ and $A=\oplus^{n}_{i=1}P_{i}$. Then we have $(\underline{dim}\oplus_{j\neq i}P_{j})_{i}$$=1$. By the above theorem, we know $A$ is not saturated. Dually, $DA$ is not saturated either.
\end{remark}

\vskip0.1in

Recall that the tilting quiver $\overrightarrow{Q}({\rm tilt} A)$ can be regarded as a subquiver of $\overrightarrow{Q}({\rm s}\tau$-${\rm tilt} A)$, then we prove the following result which is contained in [17].

\vskip0.1in

\begin{dl}{\rm [17, Theorem 3.1]}
Each connected component of the tilting quiver $\overrightarrow{Q}({\rm tilt} A)$ contains a non-saturated vertex.
\end{dl}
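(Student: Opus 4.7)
The plan is to argue by contradiction, exploiting Theorem 4.2 (the embedding of $\overrightarrow{Q}({\rm tilt}\, A)$ into $\overrightarrow{Q}({\rm s}\tau$-${\rm tilt}\, A)$) and Theorem B (connectedness of the latter). Let $\mathcal{C}$ be a connected component of $\overrightarrow{Q}({\rm tilt}\, A)$ and suppose, toward a contradiction, that every vertex of $\mathcal{C}$ is saturated.

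The key step is a closure property. For any $T\in\mathcal{C}$, Lemma 2.3 gives exactly $n$ arrows incident to $T$ in $\overrightarrow{Q}({\rm s}\tau$-${\rm tilt}\, A)$, while the saturation condition $s(T)+e(T)=n$ says that all $n$ of them already come from the embedded subquiver $\overrightarrow{Q}({\rm tilt}\, A)$. Consequently every support $\tau$-tilting mutation-neighbor of $T$ is itself a tilting module joined to $T$ by an arrow of the tilting quiver, hence lies in $\mathcal{C}$. Iterating, the vertex set of $\mathcal{C}$ is a nonempty union of connected components of the underlying undirected graph of $\overrightarrow{Q}({\rm s}\tau$-${\rm tilt}\, A)$.

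By Theorem B this graph is connected, so $\mathcal{C}$ would have to contain every vertex of $\overrightarrow{Q}({\rm s}\tau$-${\rm tilt}\, A)$; in other words, every support $\tau$-tilting $A$-module would be a tilting module. This is false: taking $e=1_{A}$ in Definition 1.1(c) exhibits $M=0$ as a support $\tau$-tilting $A$-module which is manifestly not tilting, since condition (3) in the definition of a tilting module would force $A$ to embed into a direct sum of copies of $0$, impossible as $A\neq 0$. This contradiction finishes the proof. The only point that wants some care is verifying that $\overrightarrow{Q}({\rm tilt}\, A)$ sits inside $\overrightarrow{Q}({\rm s}\tau$-${\rm tilt}\, A)$ as a \emph{full} subquiver, so that saturation really forces every $\tau$-tilting neighbor of $T$ to be a tilting module; this full-subquiver property is already implicit in Theorem 4.1, where arrows of $\overrightarrow{Q}({\rm tilt}\, A)$ are identified with minimal torsion-class inclusions between tilting modules, and beyond this no genuine obstacle arises.
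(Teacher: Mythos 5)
Your proof is correct and follows essentially the same route as the paper's: both combine the embedding of $\overrightarrow{Q}({\rm tilt} A)$ into $\overrightarrow{Q}({\rm s}\tau\textrm{-}{\rm tilt} A)$, the degree count of Lemma 2.3 at a saturated vertex, and the connectedness of the support $\tau$-tilting quiver to show that a component consisting only of saturated vertices would have to absorb the entire support $\tau$-tilting quiver. The only minor difference is your choice of witness: you exhibit the zero module (the pair $(0,A)$) as a support $\tau$-tilting module that is not tilting, whereas the paper walks a path from the component to the vertex $A$ and locates the first non-tilting module on it, which requires the separate observation that $A$ itself is non-saturated.
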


\begin{proof}
If $\overrightarrow{Q}({\rm tilt} A)$ is connected, then $A$ is one of the non-saturated vertices in $\overrightarrow{Q}({\rm tilt} A)$. Now assume $\overrightarrow{Q}({\rm tilt} A)$ is not connected. If $\overrightarrow{Q}({\rm tilt} A)$ has a connected component $\mathcal{R}$ which contains only saturated vertices, choose a vertex $T$ in $\mathcal{R}$. Since $\overrightarrow{Q}({\rm tilt} A)$ can be embedded into $\overrightarrow{Q}({\rm s}\tau$-${\rm tilt} A)$ which is connected, there is a path $A=T_{n}- \dots -T_{2}-T_{1}-T_{0}=T$ in the underlying graph $Q({\rm s}\tau$-${\rm tilt} A)$ where $T_{i}$  are support $\tau$-tilting $A$-modules for all $0\leq i \leq n$. Since $A$ is not contained in $\mathcal{R}$, there must exist support $\tau$-tilting $A$-modules in this path which are not tilting. Choose a minimal vertex $T_{j}$ in this path such that $T_{j}$ is a proper support $\tau$-tilting $A$-module and $T_{i}$ is tilting for all $0\leq i \leq j-1$. Note that $T_{j-1}$ is saturated since it is in $\mathcal{R}$, and this implies that the number of arrows starting or ending at $T_{j-1}$ in $\overrightarrow{Q}({\rm s}\tau$-${\rm tilt} A)$ is more than $n$, a contradiction. This completes the proof.
\end{proof}

\vskip0.1in

D.Happel and L.Unger conjecture in [9] that each connected component of $\overrightarrow{Q}({\rm tilt} A)$ contains only finitely many non-saturated vertices. Firstly we prove that this conjecture is true if $Q$ is a Dynkin or Euclidean quiver.

\vskip0.1in

\begin{dl}
Let $A=kQ$ be a finite dimensional hereditary algebra. If $Q$ is Dynkin or Euclidean type, then each connected component of $\overrightarrow{Q}({\rm tilt} A)$ contains finitely many non-saturated vertices.
\end{dl}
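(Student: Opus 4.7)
The plan is to treat the Dynkin and Euclidean cases separately.

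The Dynkin case is immediate: by Lemma 2.6 the number of basic tilting $A$-modules equals the constant $a_{n}(Q)$, so $\overrightarrow{Q}({\rm tilt}\,A)$ has only finitely many vertices in total and every connected component trivially contains finitely many non-saturated vertices.

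For Euclidean $Q$ I will prove the stronger statement that the entire set of non-saturated tilting $A$-modules is finite. By Theorem 5.1, a basic tilting module $T$ is non-saturated iff $(\underline{dim}\,T)_{i}=1$ for some $i\in Q_{0}$. Fixing such an $i$, sincerity of $T$ forces a decomposition $T=X\oplus T'$ in which $X$ is the unique indecomposable summand with $(\underline{dim}\,X)_{i}=1$ while every summand of $T'$ has $i$-th coordinate equal to $0$, i.e.\ is supported on $Q\setminus\{i\}$. Since $Q$ is Euclidean, $Q\setminus\{i\}$ is a disjoint union of Dynkin quivers, so the indecomposable $A$-modules supported on $Q\setminus\{i\}$ form a finite set; this bounds the possibilities for $T'$.

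It then suffices to bound the indecomposable rigid modules $X$ with $(\underline{dim}\,X)_{i}=1$ that can occur as a tilting summand, and I plan to split this along the Auslander-Reiten structure. When $X$ is preprojective or preinjective, the identity $\underline{dim}(\tau^{-k}P_{j})=c^{k}(\underline{dim}\,P_{j})$ combined with the Jordan structure of the Coxeter transformation for tame hereditary algebras yields $(\underline{dim}\,\tau^{-k}P_{j})_{i}\to\infty$ as $k\to\infty$ (since the positive null root $\delta$ has all coordinates $\geq 1$), so only finitely many preprojective (resp.\ preinjective) indecomposables satisfy $(\underline{dim})_{i}=1$. When $X$ is regular, rigidity forces $X$ to lie in a non-homogeneous tube: in a tube of rank $1$ one has $\tau X=X$, hence $\Ext^{1}_{A}(X,X)\cong D\Hom_{A}(X,X)\neq 0$, contradicting $\tau$-rigidity. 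Since $A$ is tame hereditary there are only finitely many non-homogeneous tubes, and each tube of rank $r$ contains only $r(r-1)$ rigid indecomposables, giving finitely many candidates in total.

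Putting the pieces together, for each $i\in Q_{0}$ there are only finitely many possible pairs $(X,T')$, and summing over the $n$ vertices yields only finitely many non-saturated tilting modules in total, which a fortiori proves the theorem. The main technical point I expect to grapple with is the asymptotic growth claim for preprojective/preinjective dimension vectors in the Euclidean setting; I would invoke the standard analysis of $c^{k}$ on the root lattice of a tame hereditary algebra rather than rederive it here.
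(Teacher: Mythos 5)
Your proof is correct, and in the Euclidean case it ultimately rests on the same finiteness mechanism as the paper's proof: deleting a vertex $i$ from a Euclidean quiver leaves a disjoint union of Dynkin quivers, so the $(n-1)$-summand part of a non-saturated tilting module ranges over a finite set. But you reach that decomposition and finish the argument differently. The paper obtains $T=M\oplus X$ directly from the definition of non-saturation: the arrow $T\rightarrow(M,P)$ in $\overrightarrow{Q}({\rm s}\tau$-${\rm tilt} A)$ exhibits $M$ as a tilting $kQ_{i}$-module, and finiteness of the set of such $T$ then follows because each such $M$ is an almost complete tilting module with at most two complements (Theorem 3.2). You instead invoke the dimension-vector criterion of Theorem 5.1 to produce $T=X\oplus T'$, and then bound the possible complements $X$ by a separate, self-contained Auslander--Reiten argument (linear growth of preprojective and preinjective dimension vectors along the strictly positive null root, plus the observation that a rigid regular indecomposable must sit in one of the finitely many non-homogeneous tubes, each contributing only $r(r-1)$ rigid objects). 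That second half is correct but strictly redundant: once $T'$ is confined to a finite set of almost complete tilting modules, Theorem 3.2 already caps the number of $T$ at two per $T'$. On the other hand, your version has the mild virtue of avoiding the mutation machinery at that step and of explicitly locating where the extra summand can live in the Auslander--Reiten quiver. The Dynkin case is handled identically in substance (finiteness of the entire tilting quiver), only cited differently. I see no gaps.
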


\begin{proof}
Let $A=kQ$ be a finite dimensional hereditary algebra. If $Q$ is a Dynkin quiver, then $A$ is a representation-finite algebra. So $\overrightarrow{Q}({\rm tilt} A)$ is finite and our result is true.

\par Assume $Q$ is a Euclidean quiver. If a tilting $A$-module $T$ is not saturated, there must be an arrow $T\rightarrow (M,P)$ in $\overrightarrow{Q}({\rm s}\tau$-${\rm tilt} A)$ where $T=M\oplus X$ with $X$ indecomposable and $P$ is an indecomposable  projective $A$-module. Then $M$ is a tilting $kQ_{i}$-module where $Q_{i}$ is a quiver obtained by removing a vertex $i$ from $Q$ and all arrows connected with $i$. Thus each non-saturated tilting $A$-module contains a tilting $kQ_{i}$-module as a direct summand. Since all path algebras $kQ_{i}$ for $1\leq i \leq n$ are representation-finite, there are only finitely many tilting $kQ_{i}$-modules. This implies that there are only finitely many non-saturated tilting $A$-modules. Then we get our result when $Q$ is a Euclidean quiver.
\end{proof}

\vskip0.1in

Before we prove this conjecture for $Q$ being a wild quiver with two or three vertices, we introduce the following lemma in [18].

\vskip0.1in

\begin{yl}{\rm [18, Main Theorem]}
Let $A=kQ$ be a finite dimensional hereditary algebra where $Q$ is a wild quiver with three vertices and $e$ be a primitive idempotent in $A$. Let regular $A$-module $M$ be a tilting $A/\langle e \rangle$-module and $M\oplus X$ be a tilting $A$-module. If $T \ncong M\oplus X$ is a vertex in the connected  component of $\overrightarrow{Q}({\rm tilt} A)$ containing $M\oplus X$, then $T$ has at least two sincere indecomposable direct summands and each sincere indecomposable direct summand of $T$ is $\tau$-sincere.
\end{yl}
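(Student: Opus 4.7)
My plan is to prove this by induction on the graph-distance $d(T)$ from $T$ to $M\oplus X$ inside the connected component of $\overrightarrow{Q}({\rm tilt}\,A)$. Set $e=e_i$ and decompose $M=M_1\oplus M_2$ with $(\underline{\dim}\,M_k)_i=0$ for $k=1,2$; in particular $M$ is non-sincere. By Theorem~3.2 the almost complete tilting module $M$ has a unique complement, so $X$ must occur as a summand of every neighbor of $M\oplus X$ in $\overrightarrow{Q}({\rm tilt}\,A)$, and $X$ is the unique sincere indecomposable summand of $M\oplus X$, with $(\underline{\dim}\,X)_i\geq 1$.

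For the base case $d(T)=1$, any neighbor of $M\oplus X$ has the form $T=M_l\oplus X\oplus Y_k$ with $\{k,l\}=\{1,2\}$, arising from a left mutation at $M_k$. Applying Lemma~2.4 I obtain an exchange sequence $M_k\xrightarrow{f}E\xrightarrow{g}Y_k^s\to 0$ with $E=M_l^a\oplus X^b$ a minimal left $\mathrm{add}(M_l\oplus X)$-approximation. The dimension count at vertex $i$ collapses to $b(\underline{\dim}\,X)_i=s(\underline{\dim}\,Y_k)_i$, so $Y_k$ is sincere at $i$ if and only if $b\ge 1$, equivalently $\Hom_A(M_k,X)\neq 0$. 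The nonvanishing of this $\Hom$ is where the hypotheses that $M$ is \emph{regular} and a tilting $A/\langle e\rangle$-module are used: the universal sequence of Theorem~3.1 constructing $X$ from $M$ forces nontrivial maps from the regular component(s) of $M$ into $X$. Once sincerity of $Y_k$ at all vertices is secured, $\tau$-sincerity of both $X$ and $Y_k$ follows from the Auslander--Reiten formula $D\Hom_A(U,\tau V)\cong\Ext_A^{1}(V,U)$ combined with the fact that regular components of $\mathrm{mod}\,A$ for wild hereditary algebras of rank three are of type $\mathbb{Z}A_\infty$, which pins down the supports of $\tau X$ and $\tau Y_k$.

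For the inductive step, let $T$ be at distance $d>1$ from $M\oplus X$ and choose a neighbor $T'$ at distance $d-1$ for which the conclusion already holds. The arrow between $T'$ and $T$ replaces a single indecomposable summand $N$ of $T'$ by a new indecomposable $N^*$ via an exchange sequence $0\to N\to C\to N^*\to 0$ (or its dual) with $C\in\mathrm{add}(T'[N])$. If $N$ is non-sincere, the at-least-two sincere summands of $T'$ survive in $T$ and the $\tau$-sincerity already available on them is inherited. If $N$ is sincere and (by induction) $\tau$-sincere, I must verify that either $N^*$ is again sincere and $\tau$-sincere or that a previously non-sincere summand of $T'$ becomes sincere in $T$; this is done by reading the dimension vector of $N^*$ off the exchange sequence, exploiting that $T'[N]$ still contains at least one sincere $\tau$-sincere summand. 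The main obstacle lies exactly in this second case: ruling out mutations that would drop the sincere count below two (or spoil $\tau$-sincerity) requires delicate dimension-vector bookkeeping together with specific features of the three-vertex wild hereditary setting, namely the $\mathbb{Z}A_\infty$ structure of the regular components, the action of the Coxeter transformation, and the fact that the orbit of $M$ under mutation inside the component remains controlled by the regular sector. This is where I expect the bulk of the work (and probably a case split according to the AR-type of the summands involved) to be needed.
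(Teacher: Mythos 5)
This statement is quoted verbatim from the literature: the paper labels it ``[18, Main Theorem]'' (Unger, \emph{The partial order of tilting modules for three-point-quiver algebras}) and gives no proof of its own, so there is no in-paper argument to compare yours against. Judged on its own terms, your proposal is not yet a proof: it is an induction scheme with the decisive step left open. You say explicitly that in the inductive step, when the mutated summand $N$ is sincere, ``ruling out mutations that would drop the sincere count below two (or spoil $\tau$-sincerity) requires delicate dimension-vector bookkeeping \ldots This is where I expect the bulk of the work to be needed.'' That case is precisely the content of Unger's theorem; without it the induction does not close, since nothing so far prevents a chain of mutations from eventually replacing both sincere summands by non-sincere ones or by sincere modules that fail to be $\tau$-sincere.

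There are also gaps in the base case. From the exchange sequence $M_k\to M_l^a\oplus X^b\to Y_k^s\to 0$ your dimension count at the vertex $i$ only controls $(\underline{\dim}\,Y_k)_i$; sincerity of $Y_k$ (and of $X$) at the \emph{other} vertices is asserted but not established, and the claim that $b\geq 1$ is equivalent to $\Hom_A(M_k,X)\neq 0$ is false as stated --- a nonzero map $M_k\to X$ may factor through $\mathrm{add}\,M_l$, in which case $X$ does not occur in the minimal approximation. Likewise, the passage from sincerity to $\tau$-sincerity via the Auslander--Reiten formula and the $\mathbb{Z}A_\infty$ shape of regular components is a plausible strategy but is not carried out: $D\Hom_A(U,\tau V)\cong\Ext^1_A(V,U)$ gives vanishing statements between summands of a tilting module, not positivity of every coordinate of $\underline{\dim}\,\tau X$. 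Given that the paper treats this as an external citation, the honest options are either to cite [18] as the authors do, or to supply the full combinatorial analysis of the regular sector that your sketch defers.
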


\vskip0.1in

\begin{dl}
Let $\Gamma=kQ$ be a finite dimensional hereditary algebra. If $Q$ is a wild quiver with two or three vertices, then each connected component of $\overrightarrow{Q}({\rm tilt} \Gamma)$ contains finitely many non-saturated vertices.
\end{dl}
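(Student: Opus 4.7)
My plan is to treat the two-vertex and three-vertex cases separately. In the two-vertex wild case, a connected wild quiver on two vertices is a generalized Kronecker $K_m$ with $m\ge 3$ arrows, so both simple $\Gamma$-modules $S_1, S_2$ have one-dimensional support and neither is faithful. If $T = T_1\oplus T_2$ is non-saturated, Theorem 5.1 supplies an index $i$ with $(\underline{\dim}\,T)_i = 1$, and since $|T|=2$ one summand must be $S_i$. I would then apply Theorem 3.2: a non-faithful $S_i$ has a unique complement as an almost complete tilting module, giving at most two non-saturated tilting modules in all of $\overrightarrow{Q}(\mathrm{tilt}\,\Gamma)$ and in particular in each connected component.

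For the three-vertex wild case, I would fix a connected component $\mathcal{C}$ and a non-saturated $T \in \mathcal{C}$, choose $i$ with $(\underline{\dim}\,T)_i = 1$ (Theorem 5.1), and write $T = X \oplus M$ where $X$ is the unique indecomposable summand of $T$ with $(\underline{\dim}\,X)_i = 1$. Then $M$ is supported on the two-vertex subquiver $Q_i$ with $|M|=2$, so $M$ is simultaneously a tilting $kQ_i$-module and an almost complete tilting $\Gamma$-module. I would then split into two subcases depending on whether every indecomposable $\Gamma$-summand of $M$ is regular.

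If every indecomposable summand of $M$ is regular as a $\Gamma$-module, the pair $(M,X)$ exactly meets the hypothesis of Lemma 5.5 with $e=e_i$ (note $\Gamma/\langle e_i\rangle = kQ_i$). That lemma then implies every other $T' \in \mathcal{C}$ has at least two sincere indecomposable $\Gamma$-summands; each such summand contributes $\ge 1$ to every coordinate, so $(\underline{\dim}\,T')_j \ge 2$ for all $j$, and Theorem 5.1 forces $T'$ to be saturated. Hence $\mathcal{C}$ contains at most one non-saturated vertex in this subcase.

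In the remaining subcase $M$ has an indecomposable summand $N$ that is preprojective or preinjective as a $\Gamma$-module; since $(\underline{\dim}\,N)_i=0$, $N$ is non-sincere. The key step — and I expect the main obstacle — is to invoke the standard fact for connected wild hereditary algebras that only finitely many preprojective (resp.\ preinjective) indecomposables are non-sincere, a consequence of the Coxeter transformation having spectral radius $>1$. Once $N$ is confined to a finite set, Theorem 3.2 applied over $kQ_i$ bounds the complements of $N$ (as an almost complete tilting $kQ_i$-module) by two, so $M$ has finitely many possibilities; a second application of Theorem 3.2 over $\Gamma$ bounds the choices of $X$ given $M$ by two as well. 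Altogether only finitely many such $T$ exist overall, hence only finitely many lie in $\mathcal{C}$, completing the plan.
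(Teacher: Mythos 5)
Your three-vertex argument is essentially the paper's own: the same decomposition $T=X\oplus M$ with $M$ a tilting module over the two-vertex quotient $kQ_i=\Gamma/\langle e_i\rangle$, the same dichotomy on whether $M$ is regular as a $\Gamma$-module, the same appeal to [18, Main Theorem] in the regular case and to the finiteness of non-sincere preprojective and preinjective indecomposables over a connected wild hereditary algebra in the other; you merely make explicit two steps the paper leaves implicit, namely that two sincere summands force $(\underline{\dim}\,T')_j\geq 2$ for all $j$ and hence saturation via Theorem 5.1, and the double use of Theorem 3.2 (first over $kQ_i$, then over $\Gamma$) to bound the number of remaining non-saturated modules. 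The two-vertex case is where you genuinely diverge: the paper quotes [14, XVIII, Corollary 2.16] to see that the generalized Kronecker algebra has no regular tilting modules, so that $\overrightarrow{Q}({\rm tilt}\,\Gamma)$ consists of two rays each carrying exactly one non-saturated vertex, whereas you argue directly from Theorems 5.1 and 3.2 that a non-saturated $T$ must have a simple direct summand and that each (necessarily non-faithful) simple lies in a unique tilting module. Your route is more elementary and even yields the stronger global bound of at most two non-saturated vertices; the paper's route buys the explicit shape of the component. One index does need repair: from $(\underline{\dim}\,T)_i=1$ the summand forced to be simple is not $S_i$ but $S_j$ for the \emph{other} vertex $j$ --- it is the summand with $(\underline{\dim})_i=0$, hence supported only at $j$, that must be simple. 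For instance $T=P_1\oplus P_2$ has $(\underline{\dim}\,T)_2=1$, yet the simple summand is $S_1=P_1$ while $P_2\neq S_2$. Since both simples are non-sincere, hence non-faithful, the count of at most two is unaffected once this is corrected.
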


\begin{proof}
If $Q$ is a wild quiver with two vertices, it is of the form$ \xymatrix{2 \ar@/^/[r]_. \ar[r]|. \ar@/_/[r]^. & 1}$ with at least three arrows. By [14, XVIII, Corollary 2.16], there are no regular tilting $\Gamma$-modules and all tilting $\Gamma$-modules are preprojective or preinjective. The tilting quiver $\overrightarrow{Q}({\rm tilt} \Gamma)$ is of the form
\[ \circ \rightarrow \circ \rightarrow \circ \rightarrow \dots \]
\[ \circ \leftarrow \circ \leftarrow \circ \leftarrow \dots \]
It is easy to see that each  connected components of $\overrightarrow{Q}(tilt\Gamma)$  contains exactly one non-saturated vertex.
\par Assume $Q$ is a wild quiver with three vertices and $T=T_{1} \oplus T_{2}\oplus T_{3}$ is a basic tilting $\Gamma$-module. If $T$ is a non-saturated vertex in $\overrightarrow{Q}(tilt\Gamma)$, then there exists an arrow $T \rightarrow (T_{1}\oplus T_{2}, P)$ in $\overrightarrow{Q}({\rm s}\tau$-${\rm tilt} \Gamma)$ where $P$ is an indecomposable projective $\Gamma$-module.  Let $e$ be the primitive idempotent in $\Gamma$ corresponding to $P$. Then $T_{1}\oplus T_{2}$ is a tilting $\Gamma / \langle e \rangle$-module and each non-saturated tilting $\Gamma$-module contains a tilting  $\Gamma / \langle e \rangle$-module as a direct summand.

\par If $\Gamma / \langle e \rangle$ is a representation-finite algebra, we can find only finitely many non-saturated tilting $\Gamma$-modules which contain tilting $\Gamma / \langle e \rangle$-modules as direct summands.

\par If $\Gamma / \langle e \rangle$ is a representation-infinite algebra, the quiver of $\Gamma / \langle e\rangle$ is of the form$ \xymatrix{\circ \ar@/^/[r]_. \ar@{.>}[r]|. \ar@/_/[r]^. & \circ}$ with at least two arrows.
Since there are only finitely many non-sincere indecomposable preprojective and preinjective $\Gamma$-modules, all but finitely many tilting $\Gamma / \langle e \rangle$-modules are regular $\Gamma$-modules. Thus all but finitely many non-saturated tilting $\Gamma$-modules contain tilting $\Gamma / \langle e \rangle$-modules which are regular $\Gamma$-modules as direct summands. Assume $T_{1}\oplus T_{2}$ is a regular $\Gamma$-module. By Lemma 5.4, $T$ is contained in a connected component of $\overrightarrow{Q}(tilt\Gamma)$ which has only one non-saturated vertex $T$. This completes our proof.
\end{proof}

\vskip0.1in

\begin{remark}
We should mention that the conjecture of Happel and Unger is not true for some wild quivers.
\end{remark}

\vskip0.2in

In order to provide a counterexample, we need the following lemma.

\begin{yl}{\rm [17, Theorem 1]}
Let $M$ be a partial tilting $A$-module with $n-2$ nonisomorphic indecomposable direct summands and $\overrightarrow{Q}(tilt_{M}A)$ be the subquiver of $\overrightarrow{Q}(tiltA)$ with vertices $T$ such that $M$ is a direct summand of  $T$. If $M$ is not sincere and $\overrightarrow{Q}(tilt_{M}A)$ is infinite, then $\overrightarrow{Q}(tilt_{M}A)$ is of the form
\[ \circ \rightarrow \circ \rightarrow \circ \rightarrow \dots \]
\[ \circ \leftarrow \circ \leftarrow \circ \leftarrow \dots \].
\end{yl}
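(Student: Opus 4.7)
The plan is to reduce the statement to the classification of tilting modules over a hereditary algebra with two simple modules, via the perpendicular category technique of Geigle and Lenzing. Since $A$ is hereditary and $M$ is a partial tilting $A$-module with $n-2$ pairwise nonisomorphic indecomposable direct summands, the right perpendicular subcategory
\[
M^{\perp} = \{X \in \mathrm{mod}\text{-}A \mid \Hom_{A}(M,X) = 0 = \Ext^{1}_{A}(M,X)\}
\]
is equivalent to $\mathrm{mod}\text{-}B$ for some finite dimensional hereditary $k$-algebra $B$ with exactly two isomorphism classes of simple modules. Under this equivalence, a basic tilting $A$-module $T$ contains $M$ as a direct summand if and only if $T = M \oplus U$ with $U \in M^{\perp}$ corresponding to a basic tilting $B$-module. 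The exchange sequences produced by Theorem 3.3 which give the arrows in $\overrightarrow{Q}({\rm tilt} A)$ are inherited from exchange sequences in $\overrightarrow{Q}({\rm tilt} B)$, so the assignment $U \mapsto M \oplus U$ induces an isomorphism of quivers $\overrightarrow{Q}({\rm tilt} B) \cong \overrightarrow{Q}({\rm tilt}_{M} A)$.

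Next I would invoke the classification of hereditary algebras with exactly two simple modules: up to Morita equivalence, $B$ is either the path algebra of $A_{2}$ (which has only two basic tilting modules, hence a finite tilting quiver) or a generalized Kronecker algebra with quiver $1 \rightrightarrows 2$ having $r \geq 2$ parallel arrows. Since $\overrightarrow{Q}({\rm tilt}_{M} A)$ is infinite by assumption, $B$ must be of the second kind. For such $B$, the result [14, XVIII, Corollary 2.16] shows that every tilting $B$-module is either preprojective or preinjective; the preprojective tilting modules form an infinite chain $\dots \to P_{i+1} \oplus P_{i} \to P_{i} \oplus P_{i-1} \to \dots$ under the Hasse order, while the preinjective tilting modules form a chain with the opposite orientation. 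These are precisely the two rays displayed in the statement, and they form distinct connected components of $\overrightarrow{Q}({\rm tilt} B)$; the special case $n = 2$, $M = 0$ is the Kronecker situation illustrated in Example 4.4.

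The hypothesis that $M$ is not sincere enters in identifying $B$ correctly through the perpendicular category construction, pinning down that the residual algebra is genuinely of Kronecker type rather than of finite representation type. The main obstacle is the first step: establishing the perpendicular category equivalence $M^{\perp} \simeq \mathrm{mod}\text{-}B$ and verifying that it matches the tilting quiver $\overrightarrow{Q}({\rm tilt}_{M} A)$ with $\overrightarrow{Q}({\rm tilt} B)$ as quivers, not merely as sets of vertices. Once this compatibility is in hand, the two-ray conclusion is an immediate consequence of the well-understood tilting theory of generalized Kronecker algebras.
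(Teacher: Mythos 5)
First, a point of comparison: the paper does not prove this statement at all --- it is imported verbatim as [17, Theorem 1] from Unger's work --- so your proposal can only be judged on its own terms, and on those terms it has a genuine gap at exactly the step you yourself flag as ``the main obstacle.'' The claimed characterization ``$T$ is a basic tilting $A$-module containing $M$ if and only if $T=M\oplus U$ with $U\in M^{\perp}$ a tilting $B$-module'' is false. A complement $U$ of $M$ satisfies $\Ext^{1}_{A}(M,U)=0=\Ext^{1}_{A}(U,M)$, but nothing forces $\Hom_{A}(M,U)=0$, which is the other half of membership in $M^{\perp}$. Concretely, for $A=k(1\leftarrow 2\leftarrow 3)$ and $M=P_{1}=S_{1}$ (a non-sincere partial tilting module with $n-2=1$ summand), the two tilting modules containing $M$ are $P_{1}\oplus P_{2}\oplus P_{3}$ and $P_{1}\oplus P_{3}\oplus S_{3}$, and neither complement lies in $M^{\perp}$ since $\Hom_{A}(P_{1},P_{2})\neq 0\neq\Hom_{A}(P_{1},P_{3})$; replacing $M^{\perp}$ by the left perpendicular category does not help (take $M=S_{3}$ instead, where $\Hom_{A}(P_{3},S_{3})\neq 0$). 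So the asserted quiver isomorphism $\overrightarrow{Q}({\rm tilt}\,B)\cong\overrightarrow{Q}({\rm tilt}_{M}A)$ does not follow from the Geigle--Lenzing equivalence, and the entire reduction to rank-two hereditary algebras is unsupported.

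The gap is conceptual rather than merely technical, because the correct result of this shape --- Happel and Unger's link theorem, reference [7] of this paper, identifying the tilting modules containing $M$ with the tilting complex of a hereditary algebra with two simples via a (non-identity) functor into a perpendicular-type subcategory --- requires $M$ to be \emph{faithful}, whereas the present hypothesis is that $M$ is \emph{not sincere}, hence not faithful: you are arguing precisely in the case that the link theorem excludes. Accordingly, your explanation of what non-sincerity buys (``pinning down that the residual algebra is of Kronecker type'') misses its actual role. What the hypothesis really controls is the local degree of vertices of $\overrightarrow{Q}({\rm tilt}_{M}A)$: for $T=M\oplus X\oplus Y$ the two possible mutations correspond to the almost complete tilting modules $M\oplus X$ and $M\oplus Y$, each of which has two complements or one according to whether it is faithful (Theorem 3.2 of this paper), and non-sincerity of $M$ is what allows these degrees to drop and forces the components to degenerate into rays rather than a richer rank-two tilting quiver. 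A correct proof would have to run through an argument of that kind, or through Jasso's $\tau$-tilting reduction [11] together with an identification of which support $\tau$-tilting modules over the reduced two-simple algebra correspond to genuine tilting $A$-modules; as written, your reduction establishes the conclusion only under hypotheses (faithfulness of $M$) that the statement explicitly rules out.
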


\vskip0.2in

The following example is taken from [17] which is a counterexample to the conjecture of Happel and Unger.

\newcommand{\six}{{\begin{array}{c}\vspace{-4pt}3\\\vspace{-4pt}4\\\end{array}}}

\begin{example}
Let $B=kQ$ be the path algebra of the wild quiver $Q:1\leftleftarrows 2\leftarrow 3 \rightarrow4$.  We claim that the tilting quiver $\overrightarrow{Q}({\rm tilt} B)$ contains a connected component which has infinitely many non-saturated vertices.
\end{example}

\vskip0.1in

Indeed, we assume that $N$ is a tilting module over the Kronecker algebra $k(1\leftleftarrows 2)$ and it has no nonzero projective direct summands.
Let $I_{3}=3$ and $I_{4}=\six$. Then $I_{3} \oplus I_{4}\oplus\tau_{B}N$ is a tilting $B$-module. The Coxeter
matrix of $B$ is
\begin{equation*}
\Phi_{B}=\left(
  \begin{array}{cccc}
    -1 & 2 & 0 & 0\\
    -2 & 3 & 1 & 0\\
    -2 & 3 & 1 & -1\\
     0 & 0 & 1 & -1
  \end{array}
\right)
\end{equation*}
By $\underline{dim}\,\tau_{B} N=\Phi_{B}\,\underline{dim}\,N$, we know $(\underline{dim}\,\tau_{B} N)_{4}=0$. Thus we get that $(\underline{dim}\,I_{4}\oplus I_{3}\oplus \tau_{B}N)_{4}=1$ and $I_{4}\oplus I_{3}\oplus\tau_{B}N$ is not saturated. Since there are infinitely many tilting modules over the Kronecker algebra, by Lemma 5.6, at least one of the connected components in $\overrightarrow{Q}({\rm tilt}_{I_{3}\oplus I_{4}}B)$ contains infinitely many non-saturated vertices and we denote this component by $\mathcal{G}$. Then the connected component in $\overrightarrow{Q}({\rm tilt} B)$ which contains $\mathcal{G}$ has infinitely many non-saturated vertices.

\section*{Acknowledgements}
The authors are grateful to the valuable discussions with Hongbo Yin.

\end{document}